\documentclass[11pt]{article}
\usepackage[final]{epsfig}
\usepackage{graphics}
\usepackage[left=3cm,right=3cm, top=2.5cm,bottom=2.5cm,bindingoffset=0cm]{geometry}
\usepackage{amsmath}
\usepackage{amsfonts}
\usepackage{amsthm}
\usepackage{latexsym}
\usepackage{amssymb, mathabx}
\usepackage{pifont}

\usepackage{epstopdf, setspace}
\usepackage{multicol,multirow, enumitem}
\usepackage{hyperref, mathtools}
\DeclareGraphicsRule{.tif}{png}{.png}{`convert #1 `basename #1 tif`.png}
\newlist{longenum}{enumerate}{5}
\setlist[longenum,1]{label=\roman*)}
\setlist[longenum,2]{label=\alph*)}

\usepackage{array}
 
\usepackage{wrapfig}

\usepackage{calrsfs}
\DeclareMathAlphabet{\pazocal}{OMS}{zplm}{m}{n}

\usepackage{tikz}
\usetikzlibrary{positioning}
\usetikzlibrary{decorations.text}
\usetikzlibrary{decorations.pathmorphing}
\usetikzlibrary{decorations.markings}

\usetikzlibrary{patterns}
\usepackage{pgfplots}

\usetikzlibrary{arrows} 
\tikzset{
    >=stealth',
    pil/.style={
           ->,
           thick,
           shorten <=2pt,
           shorten >=2pt,}
}
\tikzset{->-/.style={decoration={
  markings,
  mark=at position .7 with {\arrow{>}}},postaction={decorate}}}
\tikzset{-->-/.style={decoration={
  markings,
  mark=at position .75 with {\arrow{>}}},postaction={decorate}}}
  \tikzset{->--/.style={decoration={
  markings,
  mark=at position .3 with {\arrow{>}}},postaction={decorate}}}
  \tikzset{a/.style={decoration={
  markings,
  mark=at position -1.5pt with {\arrow{>}}},shorten >=2pt, postaction={decorate}}}
\tikzset{-<-/.style={decoration={
  markings,
  mark=at position .4 with {\arrow{<}}},postaction={decorate}}}  

\tikzstyle{vertex} = [coordinate]

\newtheorem{lemma}{Lemma}[section]
\newtheorem{proposition}[lemma]{Proposition}
\newtheorem{remark-definition}[lemma]{Remark-Definition}
\newtheorem{theorem}[lemma]{Theorem}
\newtheorem{corollary}[lemma]{Corollary}

\newtheorem{proposition-conjecture}[lemma]{Proposition-conjecture}
\newtheorem{problem}[lemma]{Problem}

\theoremstyle{definition}
\newtheorem{example}[lemma]{Example}

\newtheorem{definition}[lemma]{Definition}
\newtheorem{remark}[lemma]{Remark}
\begin{document}
\newcommand{\eps}{{\varepsilon}}
\newcommand{\proofend}{\hfill$\Box$\bigskip}
\newcommand{\C}{{\mathbb C}}
\newcommand{\Q}{{\mathbb Q}}
\newcommand{\R}{{\mathbb R}}
\newcommand{\Z}{{\mathbb Z}}
\newcommand{\RP}{{\mathbb {RP}}}
\newcommand{\CP}{{\mathbb {CP}}}
\newcommand{\PP}{{\mathbb {P}}}
\newcommand{\ep}{\epsilon}
\newcommand{\G}{{\Gamma}}

\newcommand{\SDiff}{\mathrm{SDiff}}
\newcommand{\SVect}{\mathfrak{s}\mathfrak{vect}}
\newcommand{\vect}{\mathfrak{vect}}

\newcommand{\Ker}[1]{\mathrm{Ker} \, #1}
\newcommand{\grad}[1]{\mathrm{grad} \, #1}
\newcommand{\sgrad}[1]{\mathrm{sgrad} \, #1}
\newcommand{\St}[1]{\mathrm{St} \, #1}
\newcommand{\rank}[1]{\mathrm{rank} \, #1}
\newcommand{\codim}[1]{\mathrm{codim} \, #1}
\newcommand{\corank}[1]{\mathrm{corank} \, #1}
\newcommand{\sgn}[1]{\mathrm{sign} \, #1}
\newcommand{\ann}[1]{\mathrm{ann} \, #1}
\newcommand{\ind}[1]{\mathrm{ind} \, #1}
\newcommand{\Ann}[1]{\mathrm{Ann} \, #1}
\newcommand{\ls}[1]{\mathrm{span} \langle #1 \rangle}
\newcommand{\Tor}[1]{\mathrm{Tor} \, #1}
\newcommand{\diff}[1]{\mathrm{d}  #1}
\newcommand{\diffFX}[2]{ \dfrac{\partial #1}{\partial #2} }
\newcommand{\diffFXp}[2]{ \dfrac{\diff #1}{\diff #2} }
\newcommand{\diffX}[1]{ \frac{\partial }{\partial #1} }
\newcommand{\diffXp}[1]{ \frac{\diff }{\diff #1} }
\newcommand{\diffFXY}[3]{ \frac{\partial^2 #1}{\partial #2 \partial #3} }
\newcommand{\K}{\mathbb{K}}
\newcommand{\centrum}{\mathrm{Z}}
\newcommand{\Complex}{\mathbb{C}}
\newcommand{\Aut}{\mathrm{Aut}}
\newcommand{\Id}{\mathrm{E}}
\newcommand{\D}{\mathrm{D}}
\newcommand{\T}{\mathrm{T}}
\newcommand{\Cont}{\mathrm{C}}
\newcommand{\const}{\mathrm{const}}
\newcommand{\Hom}{\mathrm{H}}
\newcommand{\Ree}[1]{\mathrm{Re} \, #1}
\newcommand{\Imm}[1]{\mathrm{Im} \, #1}
\newcommand{\Tr}[1]{\mathrm{Tr} \, #1}
\newcommand{\tr}[1]{\mathrm{tr} \, #1}
\newcommand{\matrixtwobytwo}{\left(\begin{array}{|cc|}\hline 0 & 0 \\0 & 0 \\\hline \end{array}\right)}
\newcommand{\wave}{\tilde}
\newcommand{\LieBracket}{ [\, , ] }
\newcommand{\PoissonBracket}{ \{ \, , \} }
\newcommand{\g}{\mathfrak{g}}
\newcommand{\h}{\mathfrak{h}}
\newcommand{\lCal}{\mathfrak{l}}
\newcommand{\e}{\mathfrak{e}}
\newcommand{\so}{\mathfrak{so}}
\newcommand{\SO}{\mathrm{SO}}
\newcommand{\Orth}{\mathrm{O}}
\newcommand{\U}{\mathrm{U}}
\newcommand{\he}{\mathfrak{hso}}
\newcommand{\ELL}{\mathfrak{D}}
\newcommand{\hyp}{\mathfrak{D}^{h}}
\newcommand{\foc}{\mathfrak{D}^{\Complex}}
\newcommand{\sP}{\mathfrak{sp}}
\newcommand{\sL}{\mathfrak{sl}}
\newcommand{\ad}{\mathrm{ad}}
\newcommand{\Ad}{\mathrm{Ad}}
\newcommand{\zenter}{\mathrm{Z}}
\newcommand{\id}{\mathrm{id}}
\newcommand{\Ham}{\mathrm{Ham}}
\newcommand{\ham}{\mathfrak{ham}}
\newcommand{\Flux}{\mathrm{Flux}}
\newcommand{\Diffeo}{\mathrm{Diff}}
\newcommand{\halfTwist}{\mathrm{ht}}
\newcommand{\closure}{\wave}

\renewcommand{\proofname}{Proof}

\newcommand{\oneform}{\alpha}
\newcommand{\oneformtwo}{\beta}
\newcommand{\oneformthree}{\gamma}

\newcommand{\circulation}{ \mathfrak c}
\newcommand{\circulationone}{ \circulation}
\newcommand{\circulationtwo}{ \wave \circulation}
\newcommand{\orbit}{\pazocal O}

\newcommand{\Fibr}{\pazocal{F}}
\newcommand{\Fibrtwo}{\pazocal{G}}

\newcommand{\Diff}{\mathfrak{curl}}

\newcommand{\MCG}{\mathrm{Mod}}
\newcommand{\Stab}{\mathrm{Stab}}

\sloppy

\newcounter{bk}
\newcommand{\bk}[1]
{\stepcounter{bk}$^{\bf BK\thebk}$%
\footnotetext{\hspace{-3.7mm}$^{\blacksquare\!\blacksquare}$
{\bf BK\thebk:~}#1}}

\newcounter{ai}
\newcommand{\ai}[1]
{\stepcounter{ai}$^{\bf AI\theai}$%
\footnotetext{\hspace{-3.7mm}$^{\blacksquare\!\blacksquare}$
{\bf AI\theai:~}#1}}

\newcommand*\circled[1]{\tikz[baseline=(char.base)]{
            \node[shape=circle,draw,inner sep=0.8pt] (char) {#1};}}

            \mathtoolsset{showonlyrefs}


\title{Classification of Casimirs in 2D hydrodynamics}

\author{Anton Izosimov\thanks{
Department of Mathematics,
University of Toronto, Toronto, ON M5S 2E4, Canada;
e-mails: {\tt izosimov@math.toronto.edu} and \tt{khesin@math.toronto.edu}
} \,
and Boris Khesin$^*$}

\date{\it To the memory of Vladimir Igorevich Arnold}
\maketitle

\begin{abstract} 
We  describe a complete list of Casimirs for 2D Euler hydrodynamics on a surface without boundary: 
we define  generalized enstrophies which, along with  circulations,  form a complete set of invariants for 
coadjoint orbits of area-preserving diffeomorphisms on a surface. We also outline a possible extension 
of main notions to the boundary case and formulate several open questions in that setting.

\end{abstract}

\tableofcontents

\bigskip


\section{Introduction} \label{sect:intro}

The famous V.~Arnold stability criterion gives a sufficient condition for stability of a steady two-dimensional flow: 
the flow is stable provided that the second variation of the energy restricted to the set of isovorticed fields is 
sign-definite. This criterion  was generalized in many ways: to magnetohydrodynamics, to stratified fluids, 
to systems with additional symmetries, furthermore, such variations were studied in higher dimensions, perturbation methods were applied to show when instabilities arise, etc.

The knowledge of invariants of  isovorticed fields becomes of utmost importance for applying this criterion or
for developing its generalizations. The reason is that the criterion is based on defining a new functional 
as a combination of the fluid energy and those invariants. The latter are often called Casimirs of 2D flows, 
or enstrophies,  or invariants  of coadjoint orbits in two-dimensional hydrodynamics. 
More than once Arnold posed the problem of classification of isovorticed fields, as well as related problem 
of minimization of Dirichlet functional in 2D for initial functions of different topology. 

While it has been known for a long time that enstrophies are first integrals of 2D incompressible fluid flows, 
a complete classification of generic Casimirs in 2D was obtained only recently in \cite{IKM, IK}. Here we revisit 
and develop that classification by comparing it to other known classification of coadjoint orbits for 
diffeomorphism groups in one dimension.
To describe the orbit classification we first present an axillary problem of classification of simple Morse functions
with respect to area-preserving diffeomorphisms of a surface. It is convenient first to formulate 
the invariants as structures related to so-called Reeb graphs of functions.

\medskip

Recall that the motion of an inviscid incompressible  fluid filling an $n$-dimensional Riemannian manifold  
$M$  is governed by the hydrodynamical {\it Euler equation}
\begin{equation}\label{idealEuler}
\partial_t u+\nabla_u u=-\nabla p
\end{equation}
on  the divergence-free velocity field $u$ of a fluid flow in $M$. Here $\nabla_u u$ stands for  the Riemannian 
covariant derivative of the field $u$ along itself, while the function $p$  is determined by the divergence-free 
condition up to an additive constant.

In this paper we consider the case of a surface, $n=2$. In this setting, the vorticity of the fluid can be regarded 
as the 
function $F=du^\flat/\omega$, where $u^\flat$ is the 1-form metric-related to the vector field $u$ on the surface, 
and $\omega$ is the Riemannian area form. (For Euclidean metric and 
$u=u_1\partial/\partial x_1 +u_2\partial/\partial x_2$ the vorticity function is 
$F= \partial u_2/\partial x_1 -\partial u_1/\partial x_2$.) According to Kelvin's law, the vorticity function 
is ``frozen into" the incompressible flow. This fact allows one to define Casimirs, i.e. first integrals of the 
Euler equation valid for any Riemannian metric. Namely, it is well known that enstrophies, i.e. all moments
$$
m_i(F):= \int_{M}\!\! F^i \,\omega, \qquad i=0, 1,2,...
$$
of the vorticity function $F$, are Casimirs. These quantities are invariants of the natural action of the group 
of area-preserving diffeomorphisms of the surface $M$.

In the case of a  flow in a two-sphere whose Morse vorticity function has one maximum and one minimum
such enstrophy invariants form a complete set 
of Casimirs, see \cite{IK}, cf. \cite{ChSv}, while for more complicated functions and domains it is not so. 
Indeed, the set of all enstrophies is known to be incomplete for flows 
with generic vorticities: there are non-diffeomorphic vorticities with the same values of enstrophies, 
see Section \ref{sect:Casimirs}. 

In this paper we give a complete description of Casimir invariants for flows of an ideal 2D fluid 
with simple Morse vorticity functions. 
We define  generalized enstrophies in terms of measured Reeb graphs  and prove that 
they together with the set of circulations form a complete list of Casimirs in 2D hydrodynamics.

\tikzstyle{vertex} = [coordinate]

 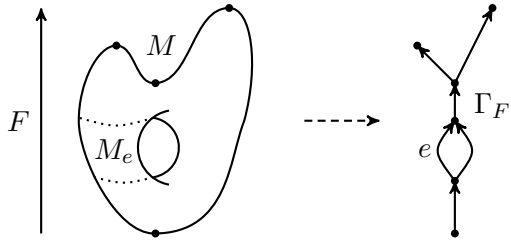
\begin{figure}[t]
\centerline{
\begin{tikzpicture}[thick]
\draw (0,1) .. controls (0,1.5) and (0.25,2) .. (0.5,2)
(0.5,2) ..controls (0.75,2) and (0.75,1.5) .. (1,1.5)
(2,2.5) ..controls (1.5,2.5) and (1.5,1.5) .. (1,1.5)
(2.2,1) .. controls (2.4,1.75) and (2.3,2.5) .. (2,2.5)
(1,-0.5) ..controls (0.5,-0.5) and (0,0.5) .. (0,1)
(1,-0.5) ..controls (2,-0.5) and (2,0.5) .. (2.2,1);
    \draw   (1.2,0.15) arc (260:100:0.5cm);
    \draw   (1,0.25) arc (-80:80:0.4cm);
    \draw [dotted] (1,0.25) .. controls (0.6, 0.15).. (0.2, 0.25);
        \draw [dotted] (1,1.04) .. controls (0.5, 0.9).. (-0.0, 1.04);
               \node at (0.47,0.6) () {$M_e$};
                       \node at (4.6,0.6) () {$e$};
\node [vertex] at (5,-0.5) (nodeA) {};
\node [vertex] at (5,0.2) (nodeB) {};
    \draw  [->] (nodeA) -- (nodeB);
\node [vertex] at (5,1) (nodeC) {};
    \draw  [->] (nodeB) .. controls +(-0.3,+0.4) .. (nodeC);
        \draw  [->] (nodeB) .. controls +(0.3,+0.4) .. (nodeC);
            \node at (5.5,1.2) (nodeZ) {$\Gamma_F$};
\node [vertex] at (5,1.5) (nodeD) {};
    \draw  [->] (nodeC) -- (nodeD);
\node [vertex] at (4.5,2) (nodeE) {};
    \draw  [->] (nodeD) -- (nodeE);
\node [vertex] at (5.5,2.5) (nodeF) {};
    \draw  [->] (nodeD) -- (nodeF);
\fill (nodeA) circle [radius=1.5pt];
\fill (nodeB) circle [radius=1.5pt];
\fill (nodeC) circle [radius=1.5pt];
\fill (nodeD) circle [radius=1.5pt];
\fill (nodeE) circle [radius=1.5pt];
\fill (nodeF) circle [radius=1.5pt];
\node [vertex] at (1.02,-0.5) (nodeAi) {};
\node [vertex] at (1.02,1.5) (nodeDi) {};
\node [vertex] at (0.5,2) (nodeEi) {};
\node [vertex] at (2,2.5) (nodeFi) {};
\fill (nodeAi) circle [radius=1.5pt];
\fill (nodeDi) circle [radius=1.5pt];
\fill (nodeEi) circle [radius=1.5pt];
\fill (nodeFi) circle [radius=1.5pt];
    \draw  [->] (-0.5,-0.5) -- (-0.5,2.5);
      \draw  [->, densely dashed] (3,1) -- (4,1);
    \node at (-0.8,1) () {$F$};
        \node at (1.1,2) () {$M$};
\end{tikzpicture}
}
\caption{Reeb graph for a height function with two maxima on a torus.}\label{torusInt}
\end{figure}

\begin{example}\label{ex:intro}
{\rm
The following example gives a glimpse of the basic constructions.
The graph $\Gamma_F$, called the Reeb graph (also called Kronrod graph),  
is the set of connected components of the levels of a height function $F$
on a  surface $M$, see Figure \ref{torusInt}. 
Critical points of $F$ correspond to the vertices  of the graph $\Gamma_F$.
This graph comes with a natural parametrization by the values of $F$. 
For a symplectic surface $M$ its area form $\omega$ induces a measure $\mu$ on the graph, 
which satisfies certain properties. 
This measured Reeb graph of the function $F$ is a complete invariant of the function $F$ with respect to the action of area-preserving diffeomorphisms of the surface:

{\bf Theorem A (= Theorem \ref{thm:sdiff-functions}).}
{\it The mapping  assigning the measured Reeb graph $\Gamma_F$ to a simple Morse function $F$  provides a one-to-one correspondence between simple Morse functions on $M$ up to  symplectomorphisms 
and measured Reeb graphs compatible with $M$.}

 \smallskip
 
 To obtain numerical invariants from this measured  graph  $\Gamma_F$ 
 one can consider  for each edge $e\in \Gamma_F$   the preimage $M_e\subset M$ bounded by 
 the corresponding critical levels of $F$. Then  infinitely many moments
$$
 I_{i,e}(F):=\int_{M_e} \!\! F^i\,\omega, ~i=0,1,2,...
$$
of the function $F$  over each $M_e$  (or, equivalently, the moments of the induced function on each edge the graph) are invariants of the $\SDiff(M)$-action, i.e., the action on  the function $F$
  by symplectomorphisms of $M$. 
  
The problem of classification of hydrodynamical Casimirs (i.e.,  invariants of coadjoint action)
for Morse coadjoint orbits includes the above problem for invariant classification of a function, 
since all invariants of vorticity are Casimirs. These two problems coincide for a sphere, as the Reeb 
graph in that case is a tree and the vorticity function fully determines the coadjoint orbit.
For surfaces of higher genus the Reeb graph has nontrivial first homology group, $\dim H_1(\Gamma_F)={\rm genus}(M)=\varkappa$ and to describe the orbit one needs also specify the circulations of the field around 
$\varkappa$ cycles on the surface.

In order to classify coadjoint orbits of the symplectomorphism group we
introduce  a notion of an anti-derivative, or circulation function,
for a Reeb graph. It turns out that such anti-derivatives
form a finite-dimensional space of dimension equal to the first Betti
number of the graph. Therefore the space of coadjoint orbits of the
symplectomorphism group of a surface is a bundle over the space of
fluid vorticities, where fiber coordinates can be thought of as
circulations, see details in Section \ref{sect:circ}.
}
\end{example}

{\bf Theorem B (= Corollary \ref{cor:Casimirs}).}
{\it 
A  complete set of Casimirs for the 2D Euler equation in a neighborhood of a Morse-type coadjoint orbit is  given 
by the moments $I_{i,e}$ for each edge $e \in \Gamma$,  $i=0,1,2,\dots$, and all circulations of the velocity $v$
over cycles in the singular levels of the vorticity function $F$ on $M$.
}
 
\medskip

\begin{remark}
{\rm 
It is interesting to compare the description of $\SDiff(M)$-orbits for a surface $M$ with the classification of coadjoint orbits of the group $\Diffeo(S^1)$ of circle diffeomorphisms~\cite{Kir1}.
Its Lie algebra is ${\vect}(S^1)$  and the (smooth) dual space ${\vect}^*(S^1)$ is identified 
with the space of quadratic differentials
on the circle, $QD(S^1):=\{F(x)(dx)^2~|~F\in C^\infty(S^1, \R)\}$.
For a generic function $F$ changing sign on the circle, a complete set of invariants is given by the ``weights"
$$
I_{a_k}(F):=\int_{a_k}^{a_{k+1}}\!\!\sqrt{|F(x)|}\,dx
$$ 
of the quadratic differential between every two consecutive  zeros ${a_k}<{a_{k+1}}$ of $F(x)$ 
on the circle $S^1$. These orbits are of finite codimension equal to the number of zeros.
In a family of functions, where two new zeros, say $a'_k$ and $a''_k$, appear between original
zeros ${a_k}$ and ${a_{k+1}}$: ${a_k}<a'_k<a''_k <a_{k+1}$, one gains two extra Casimir functions, $I_{a'_k}$ and $I_{a''_k}$, and hence the codimension of the orbit jumps up by 2.


\medskip

Similarly, for functions or coadjoint orbits of symplectomorphisms on
a 2D surface, the appearance
of a new pair of critical points, say, a saddle and a local maximum
for a function, leads
to splitting of one edge in two and, in addition to that, to
the appearance of a new edge in the corresponding Reeb graph, and
hence to two new families of Casimirs related to those extra edges, as
in Example \ref{ex:intro}.
}
\end{remark}

Note that for the action of the subgroup consisting of symplectomorphisms in the connected component 
of the identity,   one encounters additional discrete invariants related to pants decompositions 
  and possible projections of the surface to the graph.
  For the group of Hamiltonian diffeomorphisms the above set of orbit invariants is supplemented 
  by fluxes of diffeomorphisms across certain cycles on the surface $M$, see details in \cite{IKM}.

\medskip

Motivation for this type of  classification problems is coming from fluid dynamics. 
For instance, steady fluid flows are conditional extrema of the energy functional on the sets of isovorticed fields, 
so Casimirs allow one to single out such sets in order to introduce appropriate Lagrange multipliers. 
Furthermore, Casimirs in fluid dynamics are a cornerstone of the energy-Casimir method 
for the study of hydrodynamical stability, see e.g. \cite{Arn66}. 

In addition to hydrodynamics,
the results can be also used for the extension of the orbit method to infinite-dimensional groups 
of 2D diffeomorphisms. According to this method, adjacency of  coadjoint orbits of a group
or its central extension mimics families of appropriate representations of the corresponding group. 
This methods turned out to be effective for affine groups and the Virasoro-Bott group, so one may hope 
to apply it to 2D diffeomorphisms and current groups as well. Finally, note that all objects in the present 
paper are infinitely smooth (see the case of  finite smoothness in \cite{IKM}).
To the best of our knowledge, a complete description of Casimirs in 2D fluid dynamics has not previously appeared 
in the literature in a self-contained form, while various partial results could be found 
in \cite{AK, Serre, Shn, Yoshida, IK}. In the last section we present a few examples, show how the main
notions can be extended to the case of surfaces with boundary, emphasize the main difficulties and 
formulate open questions in the latter setting.

\bigskip

{\bf Acknowledgements:} A part of this work was completed while B.K. held a  Weston Visiting Professorship at the 
Weizmann Institute of Science. B.K. is grateful to the Weizmann Institute for its kind hospitality and support. 
A.I. and B.K. were partially supported by an NSERC research grant.

\medskip


\section[The hydrodynamical Euler equation]{The hydrodynamical Euler equation} \label{sect:euler}

\subsection{Geodesic and Hamiltonian frameworks of the Euler equation}

Consider an inviscid incompressible  fluid filling a closed (i.e., compact and without boundary) $n$-dimensional Riemannian manifold  $M$ 
with the Riemannian volume form $\mu$.
Arnold  \cite{Arn66} showed that the Euler equation can be regarded as an 
equation of the geodesic flow on the group $\SDiff(M):=\{\phi\in \Diffeo(M)~|~\phi^*\mu=\mu\}$ 
of volume-preserving diffeomorphisms of $M$ with respect to a right-invariant metric on the group 
given at the identity by the $L^2$-norm of the fluid's velocity field.
This geodesic description implies  the following Hamiltonian framework for the Euler equation.  
Consider the (smooth) dual space $\mathfrak g^*={\SVect}^*(M)$ 
to the space $\mathfrak g={\SVect}(M)=\{u\in {\vect}(M) \mid L_u\mu=0\}$ of divergence-free vector fields on $M$.
This dual space  has a natural  description 
as the  space of cosets  $\mathfrak g^*=\Omega^1(M) / \diff \Omega^0(M)$, where $\Omega^k(M)$ is the space of smooth $k$-forms on $M$.
For a 1-form $\alpha$ on $M$ its coset of 1-forms is 
$$
[\alpha]=\{\alpha+df\,|\,\text{ for all } f\in C^\infty(M)\}\in \Omega^1(M) / \diff \Omega^0(M)\,.
$$
The pairing between cosets and divergence-free vector fields is given by 
$\langle [\alpha],u\rangle:=\int_M\alpha(u)\,\omega$ for any  field $u\in {\SVect}(M)$. (This pairing is well-defined on cosets because the latter integral vanishes for any exact $1$-form $\alpha$ and any $u \in \SVect(M)$.)
The coadjoint action of the group $\SDiff(M)$ on the dual 
 $\mathfrak g^*$ is given by the change of coordinates in (cosets of) 1-forms on $M$ 
 by means of volume-preserving diffeomorphisms.
 
The Riemannian metric $(\,,)$ on the manifold $M$ allows one to
identify the Lie algebra and its (smooth) dual by means of the so-called inertia operator:
given a vector field $u$ on $M$  one defines the 1-form $\alpha=u^\flat$ 
as the pointwise inner product with  the velocity field $u$:
$u^\flat(v): = (u,v)$ for all $v\in T_xM$. Note also  that divergence-free fields $u$ correspond to co-closed 1-forms $u^\flat$.
The Euler equation \eqref{idealEuler} rewritten on 1-forms $\alpha=u^\flat$ is
$\partial_t \alpha+L_u \alpha=-dP$
for  an appropriate function $P$ on $M$.
In terms of the cosets of 1-forms $[\alpha]$, the Euler equation on the dual space $\mathfrak g^*$ takes the form
\begin{equation}\label{1-forms}
\partial_t [\alpha]+L_u [\alpha]=0\,.
\end{equation}
The  Euler equation \eqref{1-forms} on $\mathfrak g^*=\SVect^*(M)$ turns out to be a Hamiltonian equation with the
Hamiltonian functional  $\mathcal H$ given by the fluid's kinetic energy,
${\mathcal H}([\alpha])= \frac 12\int_M(u,u)\,\mu$ 
for $\alpha=u^\flat$.
The corresponding Poisson structure is given by  the 
natural linear Lie-Poisson bracket on the dual space $\mathfrak g^*$ 
of the Lie algebra $\mathfrak g$, see details in \cite{Arn66, AK}.
The corresponding Hamiltonian operator is given by the Lie algebra coadjoint action ${\rm ad}^*_u$, 
which  in the case of the diffeomorphism group corresponds to the Lie derivative: ${\rm ad}^*_u=L_u$.
Its symplectic leaves are coadjoint orbits of the corresponding group $\SDiff(M)$.
All invariants of the coadjoint action, also called Casimirs,  are first integrals of the Euler equation
for {\it any choice} of Riemannian metric.
The main result of this paper is a complete characterization of 
Casimirs for the 2D Euler equation on closed surfaces, see Section \ref{sect:Casimirs}.

\medskip

\subsection{Vorticity and Casimirs of the 2D Euler equation}

Recall that  according to the Euler equation  \eqref{1-forms}  the 
coset of 1-forms $[\alpha]$ evolves by a volume-preserving change of coordinates, 
i.e. during the Euler evolution it remains in the same coadjoint orbit in $\mathfrak g^*$.
Introduce the {\it vorticity 2-form} $\xi:=\diff u^\flat$ as the differential of the 1-form 
$\alpha=u^\flat$ and note  that  the vorticity exact 2-form is well-defined for cosets $[\alpha]$: 
1-forms $\alpha$ in the same coset have equal vorticities $\xi=\diff \alpha$. 
The corresponding Euler equation assumes the vorticity (or Helmholtz) form
\begin{equation}\label{idealvorticity}
\partial_t \xi+L_u \xi=0\,,
\end{equation}
which means that the vorticity form is transported by (or ``frozen into") the fluid flow (Kelvin's theorem).
The definition of vorticity $\xi$ as an exact 2-form $\xi=\diff u^\flat$ makes sense for a manifold $M$ 
of any dimension. 
In the case of two-dimensional oriented surfaces $M$   the group $\SDiff(M)$ of 
volume-preserving diffeomorphisms of $M$ coincides with the group ${\rm Symp}(M)$ 
of symplectomorphisms of $M$  with the area form $\mu=\omega$ given by the symplectic structure. 
In what follows, in 2D our main object of consideration is
is the vorticity function $F$ related to the vorticity 2-form $\xi=F\omega$ by means of the symplectic structure. 

\medskip


\begin{remark}
The fact that the vorticity 2-form $\xi$ is ``frozen into" the incompressible flow allows one to define first integrals of the  hydrodynamical Euler equation valid for any Riemannian metric on $M$. 
In 2D the Euler equation on $M$ is known to possess infinitely many so-called {\it enstrophy invariants}
$m_\lambda(F):=\int_M \!\lambda(F)\,\omega\,,$ where 
$\lambda(F)$ is an arbitrary function of the vorticity function $F$. In particular,  the enstrophy moments
$m_i(F):=\int_M F^i\,\omega$ 
are conserved quantities for any $i\in \mathbb Z_{\ge 0}$.
These Casimir invariants are  fundamental in the study of nonlinear stability of 2D flows, 
and in particular, were  the basis for Arnold's stability criterion in ideal hydrodynamics, see \cite{Arn66, AK}. 
In the energy-Casimir method one studies the second variation of the energy functional 
with an appropriately chosen combination of Casimirs. 

In the case of a flow in an annulus with a vorticity function without critical points such invariants, along with the 
circulation along one of the two boundary components,  form a complete set 
of Casimirs \cite{ChSv}, while for more complicated functions and domains it is not so, 
see Section \ref{sect:Casimirs}. 
In Section \ref{sect:Casimirs} we give a complete description of Casimirs in the general setting of Morse 
vorticity functions on two-dimensional surfaces.
\end{remark}


\section{Coadjoint orbits of the symplectomorphism group}\label{sdiffOrbitsSection}

Before classifying coadjoint orbits of the symplectomorphism group we solve the problem of finding a complete 
invariant for a function on a closed symplectic surface. (See Section \ref{sec:bcmf} for the case with boundary.)

\subsection{Simple Morse functions and measured Reeb graphs}
\begin{definition}\label{def:smf}
Let $M$ be a closed connected surface.
A Morse function  $F \colon M \to \R $ is called \textit{simple} if any $F$-level contains at most one critical point.
(Here and below under $F$-level we mean a connected component of the set $F = \const$.)
\end{definition}

With each simple Morse function $F \colon M \to \R$, one can associate a graph. This graph $\Gamma_F$ is defined as the space of $F$-levels with 
the induced quotient topology. Each vertex of this graph
corresponds to a critical level of the function $F$. 
The function $F$ on $M$ descends to a function $f$ on the graph $ \Gamma_F$. 
It is also convenient to assume that $\Gamma_F$ is oriented: edges are oriented in the direction of increasing $f$. 

\begin{example}
{\rm
Figure \ref{torusInt} shows level curves of a simple Morse function on a torus and the corresponding 
graph $\Gamma_F$.
}
\end{example}

\begin{definition}\label{RGDef}
A \textit{Reeb graph} $(\Gamma,  f)$ is an oriented connected finite graph  $\Gamma$ with a continuous function 
$f \colon \Gamma \to \R$  which satisfy the following properties.
\begin{longenum}
\item All vertices of $\Gamma$ are either $1$-valent or $3$-valent. 
\item For each $3$-valent vertex, there are either two incoming and one outgoing edge, or vice versa. 
\item The function $f$ is strictly monotonous on each edge of $\Gamma$, and the
edges of $\Gamma$ are oriented towards the direction of increasing $f$.\end{longenum}
\end{definition}
It is a standard result from Morse theory that the graph $\Gamma_F$ associated with a simple Morse function 
$F \colon M \to \R$ on an orientable connected surface $M$ is a Reeb graph in the sense of Definition \ref{RGDef}. 
We will call this graph the {Reeb graph} of the function $F$. Note that Reeb graphs classify simple Morse functions on $M$ up to diffeomorphisms.
\par
In what follows, we assume that the surface $M$ is endowed with an area (i.e., symplectic) form $\omega$. 
We are interested in the classification problem for simple Morse functions up to area-preserving (i.e., symplectic) 
diffeomorphisms. It turns out that this classification can be given in terms of so-called \textit{log-smooth measures} 
on Reeb graphs.

 \begin{definition}
 Let $\Gamma$ be a Reeb graph. Assume that $e_0, e_1$, and $e_2$ are three edges of  $\Gamma$ which 
 meet at a $3$-valent vertex $v$. Then $e_0$ is called the \textit{trunk} of $v$, and $ e_1, e_2$ are called 
\textit{branches} of $v$ if either $e_0$ is an outgoing edge for $v$, and $e_1, e_2$ are its incoming edges, 
or vice versa.
\end{definition}

\begin{definition}\label{measureproperty} A measure $\mu$ on a Reeb graph $(\Gamma,f)$ is called \textit{log-smooth} if it has the following properties:
\begin{longenum}
\item It has a $C^\infty$-smooth non-vanishing density $\diff \mu / \diff f$ at interior points and $1$-valent vertices of $\Gamma$.
\item At $3$-valent vertices, the measure $\mu$ has logarithmic singularities. More precisely, consider
a $3$-valent vertex  $v$ of $\Gamma$. Without loss of generality assume that $f(v)=0$ (if not, we replace 
$f$ by $\tilde f(x):=f(x)-f(v)$).
Let $e_0$ be the trunk of $v$, and let $e_1, e_2$ be the branches of $v$.
 Then there exist functions $ \psi, \eta_0, \eta_1,\eta_2$ of one variable, $C^\infty$-smooth in the neighborhood of the origin $0\in \R$ and such that for any point $x \in e_i$ sufficiently close to $v$, we have
\begin{equation*}
\mu([v,x])= \,\eps_i\psi(  f(x) )\ln |   f (x)| + \eta_i(   f(x) ),
\end{equation*}
where $\eps_0 = 2$, $\eps_1 = \eps_2 =  -1$,  $\psi(0) = 0$, $\psi'(0) \neq 0$,
and $\eta_0 + \eta_1 + \eta_2 = 0$.
\end{longenum}
\end{definition}
\begin{definition}
 A Reeb graph $(\Gamma,  f)$ endowed with a log-smooth measure $\mu$ is called a \textit{measured Reeb graph}.
 \end{definition}
  
  If a surface $M$ is endowed with an area form $\omega$, then the Reeb graph $\Gamma_F$ of any simple Morse function $F \colon M \to \R$ has a natural structure of a measured Reeb graph. The measure $\mu$ on $\Gamma_F$ is defined as the pushforward of the area form on $M$ under the natural projection $\pi \colon M \to \Gamma_F$.\par
  It turns out that there is a one-to-one correspondence between simple Morse functions on $M$, considered up to symplectomorphisms, and measured Reeb graphs satisfying the following natural compatibility conditions:

\begin{definition}\label{def:compatible}
{\rm
Let $M$ be a connected closed surface endowed with a symplectic form $\omega$.
A measured Reeb graph  $(\Gamma,  f, \mu)$ is \textit{compatible with} $(M, \omega)$ if
$(i)$  the dimension of $ \Hom_1(\Gamma, \R)$  is equal to the genus 
of  $M$ and $(ii)$  the volume of $\Gamma$ with respect to the measure  $\mu$ 
is equal to the volume of $M$: $\int_\Gamma \diff \mu=\int_M\omega.$
}
\end{definition}

\begin{theorem}\label{thm:sdiff-functions}{\rm \cite{IKM}}
The mapping  assigning the measured Reeb graph $\Gamma_F$ to a simple Morse function $F$ provides a one-to-one correspondence between simple Morse functions on $M$ up to a symplectomorphism 
and measured Reeb graphs compatible with $M$.
\end{theorem}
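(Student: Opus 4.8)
The plan is to establish the bijection in two halves: first that the measured Reeb graph $(\Gamma_F, f, \mu)$ of a simple Morse function $F$ is indeed compatible with $(M,\omega)$ (so the map is well-defined onto the stated target), and then that two simple Morse functions with the same measured Reeb graph differ by a symplectomorphism, together with a surjectivity statement that every compatible measured Reeb graph arises. Well-definedness is mostly Morse theory plus bookkeeping: property (i) of compatibility, $\dim H_1(\Gamma_F,\R) = \operatorname{genus}(M)$, is the standard fact that collapsing level components of a Morse function on a closed orientable surface produces a graph whose first Betti number equals the genus; property (ii) is immediate since $\mu = \pi_*\omega$. The log-smoothness of $\mu$ is a local computation near the vertices: near a $1$-valent vertex (a max or min) in Morse coordinates $F = \pm(x^2+y^2)$ the pushforward of $\omega$ has a smooth nonvanishing density in $f$; near a $3$-valent vertex (a saddle) $F = xy$ in Morse–Darboux coordinates and the area of the region $F \le t$ on each of the three local branches is computed by an elementary integral producing exactly the $\varepsilon_i \psi(t)\ln|t| + \eta_i(t)$ form, with the signs $\varepsilon_0 = 2$, $\varepsilon_1 = \varepsilon_2 = -1$ forced by the combinatorics of which sectors lie on the trunk versus the branches, and the relation $\eta_0 + \eta_1 + \eta_2 = 0$ reflecting additivity of area.

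For injectivity, suppose $F_0$ and $F_1$ have the same measured Reeb graph $(\Gamma, f, \mu)$. First choose an identification of the two Reeb graphs respecting $f$; this can be promoted to a diffeomorphism $\Phi$ of $M$ (not yet area-preserving) with $F_1 \circ \Phi = F_0$, using the fact that the Reeb graph together with $f$ is a complete invariant up to diffeomorphism (Morse theory: trivialize $F$ over each edge, match the local models at vertices). After replacing $F_1$ by $F_0$, we are reduced to the following: given two symplectic forms $\omega_0, \omega_1$ on $M$ inducing the same pushforward measure $\mu$ on $\Gamma_F$ (and the same total volume), find a diffeomorphism fixing $F$ and carrying $\omega_1$ to $\omega_0$. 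This is a fiberwise Moser argument: on each level set (a circle, or a point, away from critical levels) the condition that the pushforward measures agree means the two forms assign equal area to each "strip" between nearby levels, so one can first find a fiber-preserving diffeomorphism equalizing the areas of all sub-level regions, and then run Moser's path $\omega_t = (1-t)\omega_0 + t\omega_1$ with a vector field tangent to the levels of $F$. The tangency guarantees $F$ is preserved; the matching of the induced measures guarantees the cohomological obstruction vanishes on each piece. Surjectivity is handled by reversing this construction: given an abstract compatible measured Reeb graph, glue standard local symplectic models (cylinders $S^1 \times I$ with area determined by $d\mu/df$, and pair-of-pants / disk models at the vertices whose areas realize the prescribed $\psi, \eta_i$) along the edges; condition (i) ensures the resulting surface has the right genus, and condition (ii) ensures the total area matches.

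The main obstacle is the Moser step at the $3$-valent vertices. Away from critical levels the fiberwise Moser argument is routine, but near a saddle the level sets degenerate and the local model is a singular fibration over a neighborhood of a vertex of $\Gamma$; one must check that the log-smooth normalization of $\mu$ is exactly the right amount of data to kill the discrepancy between $\omega_0$ and $\omega_1$ in a controlled (smooth up to the stated logarithmic terms) way, and that the equalizing vector field extends smoothly across the singular level. Concretely, one works in Morse–Darboux coordinates $F = xy$, writes both forms as $\omega_i = \rho_i(x,y)\,dx\wedge dy$ with $\rho_i > 0$, and must solve $\operatorname{div}$-type equations for a vector field preserving $\{xy = \const\}$ whose time-one flow matches the areas; the constraint that the three functions $\eta_i$ sum to zero and that $\psi$ has a simple zero is precisely what makes this solvable with a smooth solution. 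This is where one should lean on the detailed local analysis of \cite{IKM}, since Theorem \ref{thm:sdiff-functions} is quoted from there; accordingly, I would present the global structure of the argument in full and cite \cite{IKM} for the delicate local normal-form computation at the saddles.
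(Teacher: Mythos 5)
The paper itself gives no proof of Theorem \ref{thm:sdiff-functions}, quoting it from \cite{IKM}, and your outline reproduces the strategy used there: reduce to a single function via the classical fact that the Reeb graph with $f$ classifies simple Morse functions up to diffeomorphism, then run an $F$-preserving Moser argument in which agreement of the pushforward measures kills the cohomological obstruction fiberwise, with the log-smooth normalization at saddles being exactly the local data needed to extend the Moser vector field smoothly across singular fibers. Your sketch is correct and follows essentially the same route, with the genuinely delicate saddle-point analysis appropriately deferred to \cite{IKM}.
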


 
 \medskip
 \subsection{Antiderivatives on graphs}\label{sect:antider}

 In order to pass from the above classification of simple Morse functions on symplectic surfaces to the classification of coadjoint orbits of the group $\SDiff(M)$, we need to introduce the notion of the antiderivative of a density on a graph. Let $\Gamma$ be an oriented graph. Let also $ \rho$ be a \textit{density} on $\Gamma$, i.e. a finite signed Borel measure.
 
 \begin{definition}\label{circProperties}
A function $\lambda \colon \Gamma \,\setminus\, V \to \R$ defined and continuous on the graph $\Gamma$ outside its set of vertices $V = V(\Gamma)$ is called an \textit{antiderivative of the density $\rho$} if it has the following properties.
\begin{longenum}
\item It has at worst jump discontinuities at vertices, which means that for any vertex $v \in V$ and any edge $e \ni v$, there exists a finite limit
$
\lim\nolimits_{{x \xrightarrow[]{e} v }} \lambda(x),
$
where $ {{x \xrightarrow[]{e} v }} $ means ``as $x$ tends to $v$ along the edge $e$''.
\item Assume that $x,y$ are two interior points of some edge $e \in \Gamma$, and that $e$ is pointing from $x$ towards $y$. Then $\lambda$ satisfies the Newton-Leibniz formula
\begin{align}
\label{stokes}
\lambda(y) - \lambda(x) = \rho([x,y]).
\end{align}
\item For a vertex $v$ of $\Gamma$ the function $\lambda$ satisfies the Kirchhoff rule at $v$:
\begin{align}
\label{3valentcirc}
 \sum_{{e \to v}} \lim\nolimits_{{x \xrightarrow[]{e} v }} \lambda(x)= \sum_{{e \leftarrow v}} \lim\nolimits_{{x \xrightarrow[]{e} v }} \lambda(x)\,,
\end{align}
where the notation $e \to v$ stands for the set of edges pointing at the vertex $v$, and ${e \leftarrow v}$ stands for the set of edges pointing away from $v$. 
\end{longenum}
\end{definition}
 \begin{figure}[b]
\centerline{
\begin{tikzpicture}[thick]
    \node [vertex] at (7,-0.2) (nodeC) {};
    \node [vertex]  at (7,1.05) (nodeD) {};
    \node [vertex] at (7,2.75) (nodeE) {};
    \node [vertex]  at (7,4.2) (nodeF) {};
    \draw  [a] (nodeC) -- (nodeD);
    \fill (nodeC) circle [radius=1.5pt];
    \fill (nodeD) circle [radius=1.5pt];
    \fill (nodeE) circle [radius=1.5pt];
    \fill (nodeF) circle [radius=1.5pt];
  \draw [a] (nodeD) .. controls +(0.7, 0.5) and +(0.7,-0.5) .. (nodeE);
    \draw [a] (nodeD) .. controls +(-0.7, 0.5) and +(-0.7,-0.5) .. (nodeE);
    \draw  [a] (nodeE) -- (nodeF);
    \node at (7.8,1.9) () {{${e_3}$}};
      \node at (6.2,1.9) () {{${e_2}$}};
     \node at (7.3,3.5) () {{${e_4}$}};
          \node at (6.8,0.3) () {{${e_1}$}};
          \node at (7.2, -0.1) () {$0$};
                \node at (6.8, 4.1) () {$0$};
                    \node at (7.3, 0.85) () {$a_1$};
                            \node at (7.8, 1.2) () {$a_1 - z$};
                              \node at (6.6, 1.2) () {$z$};
                                   \node at (6.15, 2.6) () {$ a_2 + z$};
                                   \node at (8.25, 2.6) () {$a_1 + a_3 -z$};
                                    \node at (6.65, 3) () {$-a_4$};
\end{tikzpicture}
}
\caption{The space of antiderivatives on a graph of genus one.}\label{circTorus}
\end{figure}
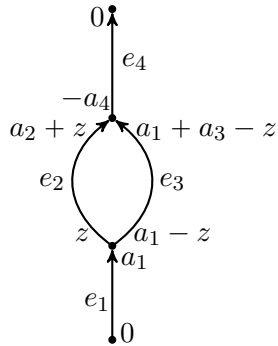
\begin{proposition}\label{circFuncs}
For an oriented graph  $\Gamma$  a density $\rho$ on $\Gamma$ admits an antiderivative if and only if
$\rho(\Gamma) = 0$.
Furthermore, if a density $\rho$ on $\Gamma$ admits an antiderivative, then the set of antiderivatives of $\rho$ is an affine space whose associated vector space is the  homology group $\Hom_1(\Gamma, \R)$.
\end{proposition}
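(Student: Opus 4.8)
The plan is to first treat the case of a spanning tree and then account for cycles. Fix a spanning tree $T \subset \Gamma$ and pick a base vertex $v_0$. For the necessity of $\rho(\Gamma) = 0$: summing the Newton–Leibniz formula \eqref{stokes} along any edge and combining with the Kirchhoff rule \eqref{3valentcirc} at every vertex, the contributions at internal vertices telescope, and since $\Gamma$ is a finite graph without boundary (every vertex is $1$- or $3$-valent, in particular nothing is "half-open"), one is left with $\rho(\Gamma) = 0$. More carefully: given an antiderivative $\lambda$, consider $\sum_{e} \big( \lim_{x \to e^+} \lambda - \lim_{x \to e^-} \lambda \big) = \sum_e \rho(e) = \rho(\Gamma)$, where $e^\pm$ denote the terminal/initial endpoints of the oriented edge $e$; regrouping the left-hand side by vertices turns it into $\sum_{v} \big( \sum_{e \to v} \lim \lambda - \sum_{e \leftarrow v} \lim \lambda \big)$, and each summand vanishes by \eqref{3valentcirc}. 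Hence $\rho(\Gamma) = 0$.

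For existence when $\rho(\Gamma) = 0$: on the tree $T$, define $\lambda$ edge by edge. Orient $T$ as a subgraph of $\Gamma$ (discarding edges not in $T$); for a point $x$ on an edge of $T$, let $\gamma_x$ be the unique path in $T$ from $v_0$ to $x$, and set $\lambda(x) := \rho_{\mathrm{signed}}(\gamma_x)$, where each edge of $\gamma_x$ is counted with $+$ or $-$ according to whether $\gamma_x$ traverses it along or against its orientation (and $\rho$ restricted to a sub-interval of an edge is the measure of that interval). This is continuous off the vertices, has one-sided limits at vertices, and satisfies \eqref{stokes} by additivity of $\rho$ on intervals. The Kirchhoff condition \eqref{3valentcirc} at each vertex $v$ of $T$ then follows because the jumps across $v$, summed as in \eqref{3valentcirc}, measure exactly $\rho$ of the "pieces of edges meeting $v$" which we are free to absorb; the global constraint $\rho(\Gamma) = 0$ is what makes the construction consistent across all vertices of the tree simultaneously (one checks the Kirchhoff defect, summed over all vertices, equals $\rho(\Gamma) = 0$, and since a tree can be processed from the leaves inward, each individual defect can be set to zero).

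Now add back the edges of $\Gamma \setminus T$. For each such edge $e^*$, with endpoints already assigned limit values by the tree construction, one must choose how $\lambda$ behaves on the interior of $e^*$: \eqref{stokes} forces $\lambda$ on $e^*$ up to the choice of a single constant (the value of the jump at one endpoint), and then \eqref{3valentcirc} at the two endpoints of $e^*$ imposes exactly one linear condition relating the two endpoint jumps to $\rho(e^*)$ and the already-fixed tree data — leaving one free real parameter per edge of $\Gamma \setminus T$. Thus the solution set is nonempty and is an affine space whose associated vector space has dimension $|E(\Gamma)| - |E(T)| = |E(\Gamma)| - |V(\Gamma)| + 1 = \dim \Hom_1(\Gamma, \R)$. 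To identify this vector space canonically with $\Hom_1(\Gamma,\R)$: the difference of two antiderivatives of the same $\rho$ is an antiderivative of the zero density, hence (by \eqref{stokes}) locally constant on each edge, hence determined by one real number per edge, i.e.\ a $1$-chain; the Kirchhoff rule \eqref{3valentcirc} is precisely the cycle condition $\partial(\text{that }1\text{-chain}) = 0$, so the space of such differences is $Z_1(\Gamma, \R) = \Hom_1(\Gamma, \R)$ (no boundaries to quotient by, since $\Gamma$ is a graph). This gives the natural isomorphism.

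The main obstacle I anticipate is bookkeeping the jump discontinuities at vertices cleanly: because $\lambda$ is only defined off $V$, "the value at $v$" means a collection of one-sided limits along incident edges, and the Kirchhoff rule is a relation among these. Making the tree construction produce the right limits at all vertices at once — rather than fixing them greedily and discovering an inconsistency at the last vertex — is exactly where the hypothesis $\rho(\Gamma) = 0$ must be used, and I would handle it by the leaves-inward induction on $T$ sketched above, or equivalently by noting that the total Kirchhoff defect is a telescoping sum equal to $\rho(\Gamma)$. Everything else (continuity, \eqref{stokes}, the affine/linear structure, the dimension count, the identification with $\Hom_1$) is routine once the vertex bookkeeping is set up.
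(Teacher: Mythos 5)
The paper states Proposition \ref{circFuncs} without giving a proof (it is established in the reference [IKM]), so there is no argument to compare against line by line; judged on its own, your proposal is correct in two of its three parts but has a genuine gap in the existence step. The necessity of $\rho(\Gamma)=0$ (telescoping the Newton--Leibniz increments over the edges and regrouping by vertices, where each group vanishes by the Kirchhoff rule) is fine, and so is your identification of the difference of two antiderivatives with a cycle: such a difference is locally constant on edges by \eqref{stokes}, i.e.\ a $1$-chain $(c_e)$, and the Kirchhoff rule for it is exactly the condition $\partial\bigl(\sum_e c_e\, e\bigr)=0$, whence the associated vector space is $Z_1(\Gamma,\R)=\Hom_1(\Gamma,\R)$.

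The gap is in the construction of one antiderivative. You build $\lambda$ on a spanning tree $T$ first and assert that the Kirchhoff conditions on $T$ can all be met because ``the total defect equals $\rho(\Gamma)=0$.'' But at that stage the defect at a vertex can only be computed from the tree edges, and the sum of these tree-only defects is $\rho(T)$, not $\rho(\Gamma)$. For the genus-one graph of Figure \ref{circTorus} with $T=\Gamma\setminus e_3$, your leaves-inward procedure forces $a_1+a_2+a_4=0$, i.e.\ $a_3=0$, which is false in general. The subsequent ``add back $e^*$'' step cannot repair this: each chord carries one new constant, but its two endpoints each impose a Kirchhoff condition, so you face two equations per chord against one unknown --- not ``one condition leaving one free parameter'' --- and inserting the chord's contribution destroys the Kirchhoff identities you had just arranged at its endpoints. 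The fix is to reverse the order (or do the linear algebra globally): the map $\R^{E}\to\R^{V}$ sending the per-edge constants to the vertex defects has linear part the boundary operator $\partial_1$, whose image for connected $\Gamma$ is the augmentation-zero hyperplane and whose kernel is $Z_1$; since the inhomogeneous terms sum to $\rho(\Gamma)$, the system is solvable if and only if $\rho(\Gamma)=0$, with solution set an affine space over $Z_1=\Hom_1(\Gamma,\R)$. Equivalently, fix the chord constants arbitrarily first and then solve uniquely on the spanning tree. With that reordering your dimension count $|E|-|V|+1$ and the $\Hom_1$ identification go through.
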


\begin{example}\label{example:circtorus}

Consider the graph $\Gamma$ depicted in Figure \ref{circTorus}.  Let $\rho$ be a density on $\Gamma$ such that
$\rho(e_i) = a_i$, where the numbers $a_i$ satisfy $a_1 + a_2 + a_3 + a_4 = 0$ (so that the density $\rho$ admits an antiderivative). The numbers near vertices in the figure stand for the limits of the antiderivative $\lambda$ of $\rho$.  The space of such antiderivatives has one parameter
$z$ 
 (by the proposition above the space of antiderivatives is one-dimensional).
\end{example}

%

\medskip

\subsection{Classification of coadjoint orbits}\label{sect:coadj-sdiff}\label{sect:circ}
Let $M$ be a closed connected surface endowed with a symplectic form $\omega$.
Recall that the regular dual $\SVect^*(M)$ of the Lie algebra $\SVect(M)$ of divergence-free
vector fields on a surface $M$  is identified with the space $\Omega^1(M) / \diff \Omega^0(M)$ 
of smooth $1$-forms modulo exact $1$-forms on $M$. 
The coadjoint action of a $\SDiff(M)$ on $\SVect^*(M)$ is given by the change of coordinates in (cosets of) 1-forms on $M$ by means of a symplectic diffeomorphism:
$
\Ad^*_\Phi \,[\oneform] = [\Phi^*\oneform].
$
To describe orbits of the coadjoint action of $\SDiff(M)$ on $\SVect^*(M) $, consider the mapping
$
\mathfrak{curl} \colon \Omega^1(M) \,/\, \diff \Omega^0(M) \to C^\infty(M)
$
given by taking the vorticity function
$$
\mathfrak{curl}[\oneform] := \frac{\diff \oneform}{\omega}.
$$
(One can view this map as taking the vorticity  of a vector field $u=\alpha^\sharp$.)
Note  that the image of the mapping $\mathfrak{curl}$ is the space of functions with zero mean.

By definition, the mapping $\mathfrak{curl}$ is equivariant with respect to the $\SDiff(M)$ action: if cosets $[\oneform], [\oneformtwo] \in \SVect^*(M)$ belong to the same coadjoint orbit, then the functions $\mathfrak{curl}[\oneform]$ and $  \mathfrak{curl}[\oneformtwo]$ are related by a symplectic diffeomorphism. In particular, if  $\mathfrak{curl}[\oneform]$ is a simple Morse function, then  so is $  \mathfrak{curl}[\oneformtwo]$.

\begin{definition}
{\rm
We say that a coset of 1-forms $[\oneform] \in \SVect^*(M)$ is \textit{Morse-type} if $\mathfrak{curl}[\oneform]$ is a simple Morse function.
A coadjoint orbit $\orbit \subset \SVect^*(M)$ is \textit{Morse-type} if any coset $[\oneform] \in \orbit$ is Morse-type (equivalently, if at least one coset $[\oneform] \in \orbit$ is Morse-type).
}
\end{definition}
\par
Let $[\oneform] \in \SVect^*(M)$ be Morse-type, and let $F := \mathfrak{curl}[\oneform]$.   
Consider the measured Reeb graph $\Gamma_F$. Since $\mathfrak{curl}$ is an equivariant mapping, this graph is invariant under the coadjoint action of $\SDiff(M)$ on $\SVect^*(M)$. However, this invariant is not complete if $M$ is not simply connected (i.e., if $M$ is not a sphere $S^2$). To construct a complete invariant, we  endow the graph $\Gamma_F$ with a \textit{circulation function} constructed as follows. Let $\pi \colon M \to \Gamma_F$ 
be the natural projection. Take any point $x$ lying in the interior of some edge 
$e \in \Gamma_F$. Then $\pi^{-1}(x)$ is a circle. It is naturally oriented 
as the boundary of the set of smaller values of $F$. The integral of $\oneform$ 
over $\pi^{-1}(x)$ does not depend on the choice of a representative $\oneform \in [\oneform]$.
Thus, we obtain a function 
$
\circulation \colon \Gamma_F\, \setminus \, V( \Gamma_F) \to \R
$
given by 
\begin{align}\label{cosetCirculation}
\circulation(x) := \int\nolimits_{\pi^{-1}(x)} \!\!\oneform\,.
\end{align}
Note that in the presence of a metric on $M$, the value $\circulation(x)$ is the circulation over the level 
$\pi^{-1}(x)$ of the vector field  $\alpha^\sharp$ dual to the 1-form $ \oneform$.
\begin{proposition}\label{circIsAntiDer}
For any Morse-type coset $[\alpha] \in \SVect^*(M)$, the function $\circulation$ given by formula~\eqref{cosetCirculation} is an antiderivative of the density $\rho(I) := \int_I f \diff \mu$ in the sense of Definition \ref{circProperties}.
\end{proposition}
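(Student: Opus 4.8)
The plan is to deduce all three properties in Definition~\ref{circProperties} from a single device: Stokes' theorem applied to the preimages $\pi^{-1}(J)\subset M$ of subarcs and small vertex neighbourhoods $J\subset\Gamma_F$, combined with the two structural identities $F=f\circ\pi$ and $\mu=\pi_*\omega$. Fix a representative $\alpha\in[\alpha]$; every integral that appears is independent of this choice, since $\pi^{-1}(x)$ is a cycle and $\pi^{-1}(J)$ is a $2$-chain whose boundary is a union of such cycles, so replacing $\alpha$ by $\alpha+df$ changes nothing. I will also use, without further comment, that $\mu$ is a \emph{finite} non-atomic Borel measure on $\Gamma_F$ --- immediate from Definition~\ref{measureproperty}, as $\mu$ has a smooth density away from $3$-valent vertices and only logarithmic singularities there --- and that $f$ is bounded, being continuous on the compact graph $\Gamma_F$. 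Finally, recall from Morse theory that for $x$ interior to an edge $\pi^{-1}(x)$ is a smooth circle, $\pi^{-1}([x,y])$ with $[x,y]$ inside a single edge is a cylinder, and $\pi^{-1}$ of a small neighbourhood of a $3$-valent (resp.\ $1$-valent) vertex is a pair of pants (resp.\ a disk) containing the critical point in its interior.

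First I would prove property~(ii), which also yields~(i). Let $x,y$ be interior points of an edge $e$ oriented from $x$ to $y$, and set $M_{[x,y]}:=\pi^{-1}([x,y])$, a compact cylinder with $\partial M_{[x,y]}=\pi^{-1}(y)\sqcup\pi^{-1}(x)$. The orientation check is the crux: with the outward-normal-first convention, the Stokes boundary orientation of $M_{[x,y]}$ agrees on $\pi^{-1}(y)$ with the canonical orientation of that circle as the boundary of the sublevel set $\{F\le f(y)\}$, and is opposite to it on $\pi^{-1}(x)$. Hence, using $d\alpha=F\omega=(f\circ\pi)\,\omega$ and $\mu=\pi_*\omega$,
\begin{equation*}
\circulation(y)-\circulation(x)=\int_{\partial M_{[x,y]}}\!\alpha=\int_{M_{[x,y]}}\!d\alpha=\int_{M_{[x,y]}}\!(f\circ\pi)\,\omega=\int_{[x,y]}\!f\,d\mu=\rho([x,y]),
\end{equation*}
which is~(ii). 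For~(i), fix an interior point $x_0$ of $e$ and an endpoint $v$; for $x$ between $x_0$ and $v$ this formula gives $\circulation(x)=\circulation(x_0)\pm\int_{[x_0,x]}f\,d\mu$, and since $\mu$ is finite and $f$ bounded the right-hand side converges as $x\xrightarrow[]{e}v$, so the one-sided limit $\lim_{x\xrightarrow[]{e}v}\circulation(x)$ exists and is finite.

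It remains to verify the Kirchhoff rule~(iii) at a vertex $v$. Let $J$ be a small closed subgraph neighbourhood of $v$, obtained by truncating each incident edge $e$ at an interior point $x_e$ close to $v$, and put $N_J:=\pi^{-1}(J)$, whose boundary circles are exactly the $\pi^{-1}(x_e)$. The same orientation analysis as above shows that $\pi^{-1}(x_e)$ enters $\partial N_J$ with sign $+1$ when $e$ points away from $v$ and $-1$ when $e$ points towards $v$, so Stokes gives
\begin{equation*}
\sum_{e\leftarrow v}\circulation(x_e)-\sum_{e\to v}\circulation(x_e)=\int_{\partial N_J}\!\alpha=\int_{N_J}\!d\alpha=\int_{N_J}\!(f\circ\pi)\,\omega=\int_{J}\!f\,d\mu.
\end{equation*}
Letting $J$ shrink to $\{v\}$, the right-hand side tends to $0$ because $\mu$ has no atom at $v$, while each $\circulation(x_e)$ tends to $\lim_{x\xrightarrow[]{e}v}\circulation(x)$ by~(i); passing to the limit yields exactly~\eqref{3valentcirc}. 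For a $1$-valent vertex the same computation degenerates ($N_J$ being a disk) and produces $\lim_{x\xrightarrow[]{e}v}\circulation(x)=0$, which is the Kirchhoff rule with one side an empty sum.

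The only genuinely delicate point is the orientation bookkeeping --- pinning down that the boundary circle $\pi^{-1}(x_e)$, carrying its canonical orientation as the boundary of the set of smaller values of $F$, contributes to $\partial N_J$ with sign $-1$ precisely when the edge $e$ is oriented towards $v$; everything else is a routine application of Stokes' theorem together with the identity $\mu=\pi_*\omega$ and the elementary finiteness and non-atomicity of $\mu$. As a sanity check, taking $J=\Gamma_F$ recovers $\rho(\Gamma_F)=\int_{\Gamma_F}f\,d\mu=\int_M F\omega=\int_M d\alpha=0$, in agreement with the solvability criterion of Proposition~\ref{circFuncs}.
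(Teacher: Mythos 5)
Your proof is correct and follows essentially the same route as the paper, whose own proof is just the one-line remark that the statement ``follows from the Stokes formula and additivity of the circulation integral''; you have simply carried out in detail the Stokes-theorem computation on preimages of subarcs and vertex neighbourhoods, including the orientation bookkeeping, that the authors leave implicit.
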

\begin{remark}
This density $\rho$ is the pushforward of the vorticity $2$-form $\diff[\alpha]$ from the surface to the Reeb graph.
\end{remark}
\begin{proof}[Proof of Proposition \ref{circIsAntiDer}]
The proof is straightforward and follows from the Stokes formula and additivity of the circulation integral.
\end{proof}
\begin{definition}
Let $(\Gamma, f, \mu)$ be a measured Reeb graph. A \textit{circulation function} $\circulation$ on $\Gamma$ is an antiderivative of the density $\rho(I) := \int_I f \diff \mu$.
A measured Reeb graph endowed with a circulation function is called a \textit{circulation graph}. 
\end{definition}
So, with any Morse-type coset $[\alpha] \in \SVect^*(M)$ we associate a circulation graph $\Gamma_{[\alpha]}$. 

\begin{theorem}{\rm \cite{IKM}}\label{thm:thm4-detailed}
Let $M$ be a compact connected symplectic  surface. Then Morse-type coadjoint orbits of $\SDiff(M)$ are in one-to-one correspondence with circulation graphs  
$(\Gamma, f, \mu, \circulation)$ compatible with $M$.
In other words, the following statements hold:
\begin{longenum}
\item For a symplectic surface $M$  Morse-type cosets $[\oneform], [\oneformtwo] \in \SVect^*(M)$ 
 lie in the same orbit of the $\SDiff(M)$ coadjoint action if and only if circulation graphs $\Gamma_{[\oneform]}$ and $\Gamma_{[\oneformtwo]}$ corresponding to these cosets are isomorphic.
\item For each circulation graph $\Gamma$ which is compatible\footnote{See Definition \ref{def:compatible} for compatibility of a graph and a surface.}
 with $M$, there exists a Morse-type $[\oneform] \in   \SVect^*(M)$ such that  $\Gamma_{[\oneform]} =(\Gamma, f, \mu, \circulation)$.
\end{longenum}
\end{theorem}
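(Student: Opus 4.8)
The plan is to prove Theorem~\ref{thm:thm4-detailed} by reducing it to the already-established classification of simple Morse functions (Theorem~\ref{thm:sdiff-functions}) together with the linear-algebraic description of antiderivatives (Proposition~\ref{circFuncs}), so that the only genuinely new content is bookkeeping of circulations. I would organize the argument as follows.

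\textbf{Step 1: well-definedness of the invariant.} First I would check that the circulation graph $\Gamma_{[\alpha]}$ really is an invariant of the coadjoint orbit. By equivariance of $\mathfrak{curl}$ (already noted in the text), a symplectomorphism $\Phi$ carrying $[\alpha]$ to $[\Phi^*\alpha]$ induces an isomorphism of measured Reeb graphs $\Gamma_F \to \Gamma_{F'}$ where $F = \mathfrak{curl}[\alpha]$, $F' = \mathfrak{curl}[\Phi^*\alpha]$; and since $\Phi$ maps the oriented level circle $\pi^{-1}(x)$ to $\pi^{-1}(\Phi_*x)$ preserving orientation, and $\int_{\pi^{-1}(\Phi_* x)}\alpha = \int_{\pi^{-1}(x)}\Phi^*\alpha$, the circulation functions correspond. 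That the value $\circulation(x)$ is independent of the representative $\alpha \in [\alpha]$ is immediate since $\int_{\pi^{-1}(x)} df = 0$ on a closed level circle. Proposition~\ref{circIsAntiDer} guarantees $\circulation$ is indeed an antiderivative of the stated density, so $\Gamma_{[\alpha]}$ is a bona fide circulation graph, visibly compatible with $M$.

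\textbf{Step 2: the ``only if'' direction (completeness).} Suppose $[\alpha],[\alpha']$ have isomorphic circulation graphs. Let $F = \mathfrak{curl}[\alpha]$, $F' = \mathfrak{curl}[\alpha']$. The underlying isomorphism of \emph{measured} Reeb graphs, via Theorem~\ref{thm:sdiff-functions}, lifts to a symplectomorphism $\Phi$ of $M$ with $\Phi^* F' = F$. Replacing $[\alpha']$ by $\Phi^*[\alpha']$, I reduce to the case $F = F'$ and the two cosets have the \emph{same} circulation function on the \emph{same} graph $\Gamma_F$. Now set $[\beta] := [\alpha] - [\alpha']$; then $\diff\beta = 0$, so $\beta$ is a closed $1$-form, and its periods over the level circles $\pi^{-1}(x)$ — which equal $\circulation_{[\alpha]}(x) - \circulation_{[\alpha']}(x)$ — all vanish. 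I must then show there is a symplectomorphism fixing $F$ and carrying $[\alpha']$ to $[\alpha]$. The key point is that the level circles $\pi^{-1}(x)$, over all edges, generate $H_1(M;\R)$ (this uses that $\dim H_1(\Gamma_F) = \mathrm{genus}(M)$, the compatibility condition, so no homology is ``lost'' in the projection $\pi$); hence $[\beta]$ is cohomologically trivial, i.e. $\beta$ is exact, i.e. $[\alpha] = [\alpha']$ already as cosets — so $\Phi$ itself does the job. I expect that the honest realization of ``level circles generate $H_1$'' requires a Mayer--Vietoris / Reeb-graph argument and is where I would slow down.

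\textbf{Step 3: the ``if'' direction (realizability).} Given a circulation graph $(\Gamma, f, \mu, \circulation)$ compatible with $M$, Theorem~\ref{thm:sdiff-functions} produces a simple Morse function $F$ on $M$ with $\Gamma_F = (\Gamma, f, \mu)$ as measured Reeb graphs. It remains to build a $1$-form $\alpha$ with $\mathfrak{curl}[\alpha] = F$ (i.e. $\diff\alpha = F\omega$, which is solvable since $\int_M F\omega = \int_\Gamma f\,d\mu = \rho(\Gamma) = 0$ because $\circulation$ exists, invoking Proposition~\ref{circFuncs}) and with prescribed circulations $\circulation(x) = \int_{\pi^{-1}(x)}\alpha$. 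Fix one such $\alpha_0$; its circulation function $\circulation_0$ is an antiderivative of the \emph{same} density $\rho$, so by Proposition~\ref{circFuncs} the difference $\circulation - \circulation_0$ is a class in $H_1(\Gamma;\R) \cong H_1(M;\R)$; choose a closed $1$-form $\beta$ on $M$ realizing that class (again using surjectivity of $H_1(M)\to H_1(\Gamma)$ dual to the level-circle statement from Step~2), and set $\alpha := \alpha_0 + \beta$. Then $\diff\alpha = \diff\alpha_0 = F\omega$ and the periods over level circles are corrected exactly as desired, so $\Gamma_{[\alpha]} = (\Gamma, f, \mu, \circulation)$.

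\textbf{Main obstacle.} The routine parts — well-definedness, solvability of $\diff\alpha = F\omega$, and the antiderivative arithmetic — follow from the cited results. The crux, appearing in both Steps~2 and~3, is the precise homological bridge between the surface and its Reeb graph: that the level circles span $H_1(M;\R)$ and that $\pi_* \colon H_1(M;\R)\to H_1(\Gamma;\R)$ is an isomorphism (forced by the compatibility condition $\dim H_1(\Gamma_F) = \mathrm{genus}\,M$). Establishing this carefully, and checking that one can realize a given graph homology class by a \emph{closed} $1$-form on $M$ whose level-circle periods match, is the step I would spend the most care on.
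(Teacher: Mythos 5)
The paper does not prove Theorem \ref{thm:thm4-detailed} here --- it is quoted from \cite{IKM} --- so your proposal can only be judged on its own terms. Steps 1 and 3 are essentially sound, but Step 2 contains a genuine error, and it sits exactly at the point you flagged as the crux. The claim that the level circles $\pi^{-1}(x)$ generate $H_1(M;\R)$ is false whenever ${\rm genus}(M)=\varkappa\ge 1$: every level circle maps to a point of $\Gamma_F$, so the fiber classes lie in $\Ker{\pi_*}$ and span only a $\varkappa$-dimensional (in fact Lagrangian) subspace of the $2\varkappa$-dimensional space $H_1(M;\R)$. Concretely, for a height-type function on $T^2$ whose level circles are homologous to $\{y=\const\}$, the closed form $\beta=dy$ has vanishing periods over all level circles yet is not exact. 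Hence from ``$\diff\beta=0$ and all level-circle periods of $\beta$ vanish'' you cannot conclude $[\oneform]=[\oneformtwo]$. Indeed the conclusion of your Step 2 cannot be correct: the cosets with $\diff\oneform=F\omega$ form an affine space over $H^1(M;\R)\cong\R^{2\varkappa}$, while prescribing the circulation function imposes only $\varkappa$ conditions (Proposition \ref{circFuncs}), so for fixed vorticity and fixed circulations there remains a $\varkappa$-dimensional family of distinct cosets. The theorem asserts these all lie on one orbit, which your argument would instead collapse to a single coset.

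What is missing is therefore not a Mayer--Vietoris verification but a construction: one must produce symplectomorphisms preserving $F$ that act transitively on this residual $\varkappa$-dimensional family --- e.g.\ time-$t$ flows along the level foliation of $F$ (fibered twists, degenerating to Dehn twists along level circles), whose effect on $[\oneform]$ is to add a closed form Poincar\'e dual to a combination of fiber classes, i.e.\ precisely a class annihilating the fiber classes. Showing that such $F$-preserving twists realize the full annihilator of the Lagrangian subspace spanned by the $[\pi^{-1}(x)]$ is the substantive content of part (i) in \cite{IKM}, and it is absent from your outline. Step 3 survives, though the homological input you actually need there is not surjectivity of $\pi_*\colon H_1(M)\to H_1(\Gamma)$ but the dual fact that any functional on the span of the fiber classes compatible with the Kirchhoff relations extends to a class in $H^1(M;\R)$; that statement is true and your construction of $\oneform=\oneform_0+\oneformtwo$ goes through.
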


\begin{remark}
The space of circulation graphs for a given vorticity function has dimension equal to $\dim H_1(\Gamma, \R)={\rm genus}(M)$. Given a vorticity function, in order to define a circulation 
graph uniquely it suffices to consider a measured graph and set the values of $\varkappa$ circulations, 
where $\varkappa=\dim H_1(\Gamma, \R)$, one value on each cycle of $\Gamma$. 
In other words, one can consider $\varkappa$ points $p_i, \, i=1,...,\varkappa$ on the graph $\Gamma$ 
so that cuts at those points turn $\Gamma\,\setminus\, \{p_1, \dots, p_\varkappa\}$ into a tree, i.e. graph without cycles. 
Then by prescribing values of an antiderivative at all those points $p_i$ we 
determine the circulation function on the graph uniquely.
\end{remark}

\begin{remark}
For fluid dynamics we are interested only in connected components of coadjoint orbits, i.e. orbits with respect to 
the group $\SDiff_0(M)$, which is the connected component of the identity in the group $\SDiff(M)$ of 
all symplectomorphisms of $M$. To classify orbit invariants for the connected group $\SDiff_0(M)$ one needs to 
supplement invariants for $\SDiff(M)$ given by Theorem \ref{thm:thm4-detailed} 
by adding certain discrete invariants 
related to pants decompositions of the surface $M$ and Dehn half-twists. A complete list of the corresponding invariants
is given by Theorem 4.7 in \cite{IKM}, which we refer to for more detail. Note that those discrete invariants 
do not affect the list of Casimirs we are interested in here. 
\end{remark}

\medskip

\section{Casimir invariants of the 2D Euler equation}\label{sect:Casimirs}
Above we classified coadjoint orbits of the group $\SDiff(M)$ in terms of graphs with certain additional structures, 
see Theorem \ref{thm:thm4-detailed}. However, for applications, it is important to describe numerical invariants of the coadjoint action, i.e., Casimir functions. We begin with the description of such invariants for functions on symplectic surfaces.\par
Let $(M, \omega)$ be a closed connected symplectic surface, and let $F$ be a simple Morse function on $M$. 
With each edge $e$ of the measured Reeb graph $\Gamma_F = (\Gamma, f ,\mu)$, one can associate 
an infinite sequence of moments
$$
m_{i,e}(F) = \int_e f^i \,\diff \mu = \int_{M_e}\!\! F^i \,\omega\,,
$$
where $ i =0,1,2,\dots$, and $M_e = \pi^{-1}(e)$ for the natural projection $\pi \colon M \to \Gamma$. Obviously, the moments $m_{i,e}(F) $ are invariant under the action of $\SDiff(M)$ on simple Morse functions. Moreover, they form a complete set of invariants in the following sense:

\begin{theorem}\label{thm:moments}
Let $(M, \omega)$ be a closed connected symplectic surface, and let $F$ and $G$ be simple Morse functions on $M$. Assume that $\phi \colon \Gamma_F \to \Gamma_G$ is an isomorphism of abstract directed graphs which preserves moments on all edges. Then $\Gamma_F$ and $\Gamma_G$ are isomorphic as measured Reeb graphs, and there exists a symplectomorphism $\Phi \colon M \to M$ such that $\Phi_*F = G$.
\end{theorem}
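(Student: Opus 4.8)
The plan is to reduce Theorem \ref{thm:moments} to Theorem \ref{thm:sdiff-functions}, which already tells us that two simple Morse functions are symplectomorphic if and only if their measured Reeb graphs are isomorphic (as measured Reeb graphs, i.e.\ respecting $f$, the orientation, and the measure $\mu$). So the whole content is local: given an abstract directed-graph isomorphism $\phi\colon\Gamma_F\to\Gamma_G$ that matches all the moments $m_{i,e}$ edge by edge, we must upgrade $\phi$ to an isomorphism of measured Reeb graphs, and then invoke Theorem \ref{thm:sdiff-functions}. The key observation is that the data $\{m_{i,e}(F)\}_{i\ge 0}$ on a single edge $e$ is equivalent to the data of the pushforward measure $f_*\mu|_e$ on the interval $f(e)\subset\R$, hence equivalent, up to the orientation-preserving monotone reparametrization that identifies any two edges carrying the same $f$-interval, to the measured edge itself. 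Thus matching moments on each edge will force $\phi$ to intertwine the parametrizing functions $f$ and the measures on corresponding edges.

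First I would work on one edge at a time. Fix an edge $e$ of $\Gamma_F$ and its image $e'=\phi(e)$ in $\Gamma_G$. On $e$ the function $f$ is a homeomorphism onto an interval $[a,b]\subset\R$ (half-open or open at endpoints corresponding to $1$- vs.\ $3$-valent vertices), and the log-smooth measure $\mu$ pushes forward under $f$ to a Borel measure $\nu_e$ on $[a,b]$; likewise on $e'$ we get $[a',b']$ and $\nu_{e'}$. The moments are $m_{i,e}(F)=\int_{[a,b]}t^i\,d\nu_e(t)$, similarly for $e'$. I would first show $[a,b]=[a',b']$: the total mass $m_{0,e}$ and, say, the ratio $m_{1,e}/m_{0,e}$ and higher moments determine the support's convex hull; more robustly, $a=\inf\,\mathrm{supp}\,\nu_e=\lim_i (m_{i,e}/m_{0,e})^{1/i}$-type formulas, or simply: two compactly supported positive measures on $\R$ with the same moment sequence are equal (Hausdorff moment problem is determinate on a bounded interval), and from that equality the supports' closed convex hulls coincide. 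So $\nu_e=\nu_{e'}$ as measures on $\R$, in particular $[a,b]=[a',b']$, i.e.\ $f$ and $g$ agree on corresponding edges after identifying $e,e'$ via $\phi$.

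Having $\nu_e=\nu_{e'}$, the unique orientation-preserving homeomorphism $e\to e'$ compatible with the monotone functions $f,g$ (namely $g|_{e'}^{-1}\circ f|_e$, well-defined because $f(e)=g(e')$) then pushes $\mu|_e$ to $\mu|_{e'}$, since pushing a measure forward along a homeomorphism that intertwines $f$ and $g$ is determined by pushing $\nu_e$ to $\nu_{e'}$. Doing this simultaneously over all edges and checking the pieces glue continuously at vertices — which holds because $\phi$ is a graph isomorphism, so the edge-homeomorphisms agree on the shared vertices by construction (both send the vertex with a given $f$-value to the vertex with the same $g$-value) — produces a homeomorphism $\Gamma_F\to\Gamma_G$ that intertwines $f$ with $g$, preserves orientation, and is measure-preserving. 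That is exactly an isomorphism of measured Reeb graphs. (One should also note that the log-smooth structure at $3$-valent vertices is automatically respected: it is a property of the measured graph, not extra data, and an isomorphism of measured Reeb graphs is just a measure- and $f$-preserving graph isomorphism.) Finally, Theorem \ref{thm:sdiff-functions} yields the desired symplectomorphism $\Phi$ with $\Phi_*F=G$.

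The main obstacle I expect is the moment-matching step at the level of full rigor: one must be careful that $\nu_e$ is genuinely a finite measure on a compact interval (true, since $\mu$ is a finite measure and $f$ is bounded on $M$) so that the Hausdorff moment problem applies and determinacy gives $\nu_e=\nu_{e'}$ from equality of all moments; and one must handle the bookkeeping of open vs.\ closed endpoints of edges and the fact that $\mu$ has (integrable, logarithmic) singularities at $3$-valent vertices, so that $\nu_e$ may have unbounded density near an endpoint — this does not affect finiteness or the moment argument, but it is the point where care is needed. The gluing at vertices and the final appeal to Theorem \ref{thm:sdiff-functions} are then routine.
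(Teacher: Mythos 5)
Your proposal is correct and follows essentially the same route as the paper's proof: push the measure on each edge forward along $f$ to a measure on an interval in $\R$, use determinacy of the Hausdorff moment problem on a compact interval to conclude the pushed-forward measures coincide, and then assemble an isomorphism of measured Reeb graphs and invoke Theorem \ref{thm:sdiff-functions}. You merely spell out more explicitly the gluing of the edge-wise homeomorphisms at vertices and the final appeal to Theorem \ref{thm:sdiff-functions}, which the paper compresses into ``which implies the proposition.''
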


\begin{proof}
Consider an edge $e = [v ,w] \in \Gamma_F$. Pushing forward the measure $\mu$ on $e$ by means of the homeomorphism $f \colon e \to [f(v), f(w)] \subset \R$, we obtain a measure $\mu_f$ on the interval $I_f = [f(v), f(w)]$, whose moments coincide with the moments of $\mu$ at $e$. Repeating the same construction for the measure on $\Gamma_G$, we obtain another measure $\mu_g$, which is defined on the interval $I_g = [g(\phi(v)), g(\phi(w))]$ and has the same moments as $\mu_f$. \par Now, consider any closed interval $I \subset \R$ which contains both $I_f$ and $I_g$. Then the measures $\mu_f$, $\mu_g$ may be viewed as measures on the interval $I$ supported at $I_f$ and $I_g$ respectively. The moments of the measures $\mu_f, \mu_g$ on $I$ coincide, so, by the uniqueness theorem for the Hausdorff moment problem (see Remark \ref{rem:hausdorff}), we have $\mu_f = \mu_g$, which implies the proposition.
\end{proof}

\begin{remark}\label{rem:hausdorff}
The Hausdorff moment problem gives the following necessary and sufficient condition: a sequence
of numbers $m_k$ can be the set of moments $m_k(\lambda)=\int_0^1\lambda^k \,\diff \mu(\lambda)$ of some 
Borel measure $\mu$ supported on the interval $[0,1]$ if and only if it satisfies the so-called monotonicity 
conditions. The latter are linear inequalities on $m_k$, which can be derived from the relations 
$\int_0^1\lambda^k (1-\lambda)^n\,\diff \mu(\lambda)\ge 0$ for all integer $k, n\ge 0$, where the left-hand side
is expressed in terms of $m_k$. (For instance, 
$m_3-2m_4+m_5=\int_0^1\lambda^3 (1-\lambda)^2\,\diff \mu(\lambda)\ge 0$.) In our case, replacing $\lambda$ 
by the parameter $f$ we only employ the statement that the measure $\mu(f)$ is fully determined by the set 
$\{m_k, k=0,1,2,\dots\}$. 

In fact, it turns out that under certain regularity conditions the measure $\mu$ 
can be found in a constructive way from the moment sequence $\{m_k\}$.
Assume, e.g. that $ \mu(\lambda)$ is supported on a segment $[-L,L]$ and is given by a smooth positive density 
function $ \diff \mu(\lambda)= w(\lambda)\,\diff \lambda$. 
Then consider the function $\Phi$ of a complex variable $\lambda$ defined by
$$
\Phi(\lambda)=\int_{[-L,L]}\frac{\diff \mu(z)}{\lambda - z}=\sum_{k\ge 0} \frac{m_k}{\lambda^{k+1}}\,.
$$
The integral expression shows that $\Phi$ is defined and holomorphic in the complement of the real segment 
$[-L,L]\subset \R$. (One can also show that $|m_k|\le CL^k$ and hence the series converges for $|\lambda|>L$.)
Now the measure density $w(\lambda)$ can be recovered from $\Phi$ as its normalized jump across the cut 
$[-L,L]$ in the real axis (see \cite{Akhiezer, Segal}):
$$
 w(\lambda)=\frac{1}{2\pi i}\lim_{\epsilon\to 0}(\Phi(\lambda-i\epsilon)-\Phi(\lambda+i\epsilon))\,.
$$
\end{remark}

\medskip

The above Theorem \ref{thm:moments} allows one to describe Casimirs of the 2D Euler equation on $M$, i.e. invariants of the coadjoint action 
of the symplectomorphism group $\SDiff(M)$. 
Let $F$ be a Morse vorticity function of an ideal flow with velocity $v$ on a closed surface $M$, and let
$\Gamma$ be its Reeb graph. The corresponding moments  $m_{i,e}(F) $ for this vorticity are natural to call 
{\it generalized enstrophies}. Then group coadjoint orbits in the vicinity of an
orbit with the vorticity function $F$ are singled out as follows.

\begin{corollary}\label{cor:Casimirs}
A  complete set of Casimirs of a 2D Euler equation in a neighborhood of a Morse-type coadjoint orbit is  given 
by the moments $m_{i,e}$ for each edge $e \in \Gamma$,  $i=0,1,2,\dots$, and all circulations of the velocity $v$
over cycles in the singular levels of $F$ on $M$.
\end{corollary}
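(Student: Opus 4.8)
The plan is to deduce Corollary~\ref{cor:Casimirs} from the orbit classification of Theorem~\ref{thm:thm4-detailed} together with the completeness of moments in Theorem~\ref{thm:moments}, by checking two things: (a) the listed quantities are indeed Casimirs, i.e.\ constant on coadjoint orbits, and (b) they separate nearby orbits, i.e.\ two Morse-type cosets with the same values of all listed invariants lie on the same orbit. Step (a) is immediate: the moments $m_{i,e}$ are functions of the measured Reeb graph $\Gamma_F$ (they are its edge-wise moments), which by equivariance of $\mathfrak{curl}$ is an orbit invariant; and each circulation over a cycle in a singular level of $F$ is, up to the coset ambiguity $\alpha\mapsto\alpha+df$ (which integrates to zero over a cycle), the period $\int_\gamma\alpha$, invariant under pullback by a symplectomorphism isotopic to one carrying $\Gamma_F$ to $\Gamma_G$. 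So the content is step (b).

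For step (b), let $[\alpha],[\beta]\in\SVect^*(M)$ be Morse-type with $F=\mathfrak{curl}[\alpha]$, $G=\mathfrak{curl}[\beta]$, and suppose all $m_{i,e}$ and all singular-level circulations agree under some identification of the graphs. First I would use Theorem~\ref{thm:moments}: the equality of all edge moments forces an isomorphism $\phi\colon\Gamma_F\to\Gamma_G$ of measured Reeb graphs and a symplectomorphism $\Phi\colon M\to M$ with $\Phi_*F=G$. Replacing $[\beta]$ by $\Phi^*[\beta]$, we may assume $F=G$ and $\Gamma_F=\Gamma_G$ as measured Reeb graphs, with the two circulation functions $\circulation_{[\alpha]}$, $\circulation_{[\beta]}$ both antiderivatives (Proposition~\ref{circIsAntiDer}) of the same density $\rho(I)=\int_I f\,d\mu$. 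By Proposition~\ref{circFuncs} the space of antiderivatives is an affine space over $H_1(\Gamma,\R)$, so $\circulation_{[\alpha]}-\circulation_{[\beta]}$ is a class in $H_1(\Gamma,\R)$; I then want to show this class is zero. This is exactly where the singular-level circulations enter: a basis of $H_1(\Gamma,\R)$ can be realized by cycles each contained in (passing through) a single singular level of $F$, because contracting each edge of $\Gamma$ to its initial vertex realizes $H_1(\Gamma,\R)$ on the vertex set, and a vertex of $\Gamma$ is a critical level of $F$; concretely, one chooses $\varkappa$ cut points as in the Remark after Theorem~\ref{thm:thm4-detailed} but slides each cut into an adjacent singular level. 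Matching the prescribed circulation over such a cycle pins down the value of any antiderivative there, so agreement of all singular-level circulations kills the $H_1$-ambiguity, giving $\circulation_{[\alpha]}=\circulation_{[\beta]}$. Hence the circulation graphs coincide, and Theorem~\ref{thm:thm4-detailed}(i) yields that $[\alpha]$ and $\Phi^*[\beta]$ lie on the same $\SDiff(M)$-orbit, so $[\alpha]$ and $[\beta]$ do too.

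Finally I would address the word ``neighborhood'': the claim is about \emph{nearby} orbits, so I only need that, for $[\beta]$ sufficiently close to $[\alpha]$ in $\SVect^*(M)$, the graph $\Gamma_G$ is abstractly isomorphic to $\Gamma_F$ (no new critical points appear, no edges are born or split), which holds because simple Morse functions with a fixed Reeb graph type form an open set, and then the moment identification $\phi$ in step (b) is forced on the nose rather than assumed. The main obstacle I anticipate is the bookkeeping in step (b) that the chosen $H_1(\Gamma,\R)$ basis is genuinely represented by cycles lying in singular levels of $F$ on $M$ (as opposed to on the graph only) and that the integral of $\alpha$ over such a surface cycle equals the graph-theoretic circulation period — i.e.\ reconciling ``circulations over cycles in singular levels of $F$'' with ``values of the antiderivative $\circulation$ at cut points of $\Gamma$.'' Everything else is a direct citation of the preceding results.
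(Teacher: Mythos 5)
Your proposal is correct and takes essentially the same route the paper intends: the paper states Corollary~\ref{cor:Casimirs} without a separate argument, deriving it from Theorem~\ref{thm:moments} (edge moments determine the measured Reeb graph and the vorticity up to symplectomorphism) combined with Theorem~\ref{thm:thm4-detailed} and Propositions~\ref{circFuncs} and \ref{circIsAntiDer} (the circulations kill the residual $H_1(\Gamma,\R)$ ambiguity in the antiderivative), which is exactly your steps (a) and (b). The one loose phrase --- that an $H_1(\Gamma,\R)$ basis is ``realized by cycles contained in a single singular level'' --- is better stated as you yourself anticipate: the cycles in a singular level are limits of regular fibers, so their circulations are the one-sided limits of $\circulation$ at the vertices, and these limits together with the density $\rho$ and the Kirchhoff rule determine the antiderivative uniquely.
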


Note that the (finite) set of required circulations can be sharpened by considering fewer quantities 
needed to describe the circulation function, as in Section \ref{sect:coadj-sdiff}.


\begin{remark}
As invariants of the coadjoint action of $\SDiff(M)$, one usually considers total moments
$$
m_i(F)= \int_{M}\!\! F^i \,\omega\ = \int_{\Gamma} f^i \,\diff \mu \, ,
$$
where $F = \Diff[\alpha]$ is the vorticity function, and $(\Gamma,f,\mu)$ is the measured Reeb graph of $F$. However, the latter moments do not form a complete set of invariants even in the case of a sphere or a disk.

 \begin{figure}[t]
 \centering
\begin{tikzpicture}[thick, scale = 1.3]
\node [vertex] (A) at (4.9,0.7) {};
\node[vertex] (B) at (5.5,-0.3) {};
\node[] () at (4.5,0.6) {$\Gamma$};
\node[vertex] (C) at (6,1) {};
\node[vertex] (D) at (5.5,-1) {};
    \fill (A) circle [radius=1.5pt];
    \fill (B) circle [radius=1.5pt];
        \fill (C) circle [radius=1.5pt];
            \fill (D) circle [radius=1.5pt];
    \draw [a] (B) -- (A);
        \draw [a] (B) -- (C);
            \draw [a] (D) -- (B);
            \draw [->] (8,-1.2) -- (8,1.5);
            \draw [dotted] (5.32, 0) -- (7.95,0);
                  \draw [dotted] (5.1, 0.4) -- (7.95,0.4);
                   \fill (8,0) circle [radius=1pt];
                   \draw [line width=1.5pt] (5.27,0) -- (5.37, 0);
                      \draw  [line width=1.5pt] (5.57,0) -- (5.67, 0);
                      
                      \draw  [line width=1.5pt] (5.03,0.4) -- (5.13, 0.4);
                          \draw  [line width=1.5pt] (5.73,0.4) -- (5.83, 0.4);
                     \fill (8,0.4) circle [radius=1pt];
                 \node () at (8.15, 1.3) {$f$};
                 \node () at (8.12, 0) {$a$};
                       \node () at (8.12, 0.4) {$b$};
                         \node () at (5, 0.2) {$I_1$};
                          \node () at (5.9, 0.2) {$I_2$};
\end{tikzpicture}
\caption{Modifying the measure on the edges.}\label{modMeasure}
\end{figure}
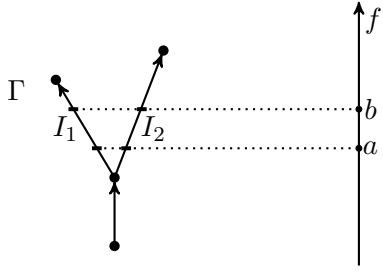
Consider, for example, the measured Reeb graph $(\Gamma, f ,\mu)$ depicted in Figure \ref{modMeasure}. Let $\mu'$ be any smooth measure on $\R$ supported in $[a,b]$. Define a new measure $\tilde \mu$ on $\Gamma$ by ``moving some density from one branch to another", i.e. by setting
\begin{align*}
\tilde \mu := \begin{cases} \mu + f^*(\mu') \mbox{ in } I_1,\\
 \mu - f^*(\mu') \mbox{ in } I_2,
\end{cases}
\end{align*}
and $\tilde \mu := \mu$ elsewhere. Then $(\Gamma, f ,\tilde \mu)$ is a again a measured Reeb graph. Moreover, for all total moments we have
$$
\int_\Gamma f^k\,\diff \mu = \int_\Gamma f^k\,\diff \tilde \mu\,.
$$
However, the measured graphs $(\Gamma, f ,\mu)$ and $(\Gamma, f ,\tilde \mu)$ are not isomorphic and thus correspond to two different coadjoint orbits of $\SDiff(S^2)$.
\end{remark}

\medskip 


\section{Examples and open questions}\label{sec:examples}

In this section we consider several examples and open questions related to the description of Casimirs for the case of a surface with boundary.

\subsection[Generalized enstrophies and circulations]{Generalized enstrophies and circulations}

In the next example we look at a genus one surface $M$ with an area form $\omega$ and 
a Morse height function $F$ on it as in Figure \ref{torusInt}.
Consider the domain $M_e\subset M$ associated with each edge $e$ of the Reeb graph $\Gamma_F$.
This domain $M_e=\pi^{-1}(e)$  is the preimage of the edge bounded by the corresponding critical levels of $F$. 
In this example there are 6 edges in the graph  $\Gamma_F$. 
Consider the infinite set of all generalized enstrophies, i.e. all 
moments  $\int_{M_e} \!\! F^k\,\omega$ of the vorticity function in these regions.
All generalized enstrophies are Casimirs of the coadjoint action of the group $\SDiff(M)$.

They do not exhaust all Casimirs, but in general must be supplemented  by several circulations. 
In this example of a torus one needs to fix the  value of one circulation (and, more generally, 
one  value for each handle of the surface). For instance, one can fix a value of the circulation 
function at the lower boundary of domain $M_e$, corresponding to the bottom of edge $e$ in Figure \ref{torusInt}.
(In Corollary \ref{cor:Casimirs} we mentioned circulations over all critical levels of $F$, which contains 
the one above, and several other circulations which are dependent on it.) Note that the set of all 
generalized enstrophies and circulations described in the corollary is not a ``minimal set" of Casimirs, as 
the Hausdorff moment problem does not claim the minimality.

\begin{remark}
Recall that the dual   $\mathfrak g^*={\SVect}^*(M)$ to the Lie algebra 
$\mathfrak g={\SVect}(M)$ of divergence-free vector fields on $M$ consists of cosets of 1-forms
$[\alpha]=\{\alpha+df\,|\,\text{ for all } f\in C^\infty(M)\}$, elements of the quotient 
$ \Omega^1(M) / \diff \Omega^0(M)={\SVect}^*(M)$.
The  function $F=\diff \wave \alpha/\omega$, as well as values 
$\circulation(x) := \int\nolimits_{\ell_x} \!\!\wave \alpha$, are defined for any 
$[\wave \alpha] \in \mathfrak g^*={\SVect}^*(M)$ in a neighborhood of the coadjoint orbit of $[\alpha]$.
Their invariant definition, 
i.e. the definition relying only on the choice of an area form, but not a metric on $M$, in a sense,
explains  the Casimir property of those quantities. On the other hand, the {\it interpretation} of those 
values as the set of {\it vector fields with given vorticity and circulations} (rather than the set of 1-forms), which are 
metric-related to the 1-forms, requires the presence of metric, and hence such a set is not  $\SDiff(M)$-invariant. 
\end{remark}



\subsection{The boundary case: Morse functions}\label{sec:bcmf}

Here we briefly describe the necessary changes in the classification theorems in the case of a surface $M$ 
with boundary $\partial M$ and main difficulties which arise in this setting. 
Now the group of area-preserving diffeomorphisms  of a connected surface $M$ has 
the Lie algebra consisting of divergence-free vector fields on $M$ tangent to  $\partial M$. 
As before, we first try to classify generic functions on $M$ with respect to this group action,
and then move to coadjoint orbirs.

\begin{definition}
Let $M$ be a compact connected surface with a possibly non-empty boundary $\partial M$.
A Morse function  $F \colon M \to \R $ is called \textit{simple} if it satisfies the following conditions:

\begin{longenum}
\item $F$ does not have critical points at the boundary;
\item the restriction of $F$ to the boundary $\partial M$ is a Morse function;
\item all critical values of $F$ and its restriction $F|_{\partial M}$ are distinct\footnote{Slightly more generally, in agreement with Definition \ref{def:smf}, one can assume that any $F$-level contains either at most one critical point or at most one critical point of the restriction (but not both).}.\end{longenum}
\end{definition}

Associate the following {\it Reeb graph} $\Gamma_F$ defined as the space of $F$-levels to such 
a simple Morse function $F \colon M \to \R$. 
Each vertex of this graph
corresponds either to a critical level of the function $F$ or to a critical point  of its restriction $F|_{\partial M}$ to 
the boundary. As before, the function $F$ on $M$ descends to a function $f$ on the graph $ \Gamma_F$,
which allows us to orient the edges of $\Gamma_F$ by increasing $f$. 

Note that now noncritical levels of $F$ are either circles or segments. 
We denote the corresponding edges of the Reeb graph  by {\it solid lines} if they correspond to {\it circle levels} 
and by {\it  dashed lines} if they correspond to {\it segment levels.} 
In the boundary case, in addition to two types of vertices for solid lines, max/min
 corresponding to 1-valent vertices, and saddles corresponding to $Y$-type 3-valent vertices, we have 
 5 more types of vertices involving dashed lines.
 
 \smallskip
 \begin{figure}[t]
{
\begin{tabular}{m{1cm}|m{3cm}|m{2cm}|m{1cm}|m{3cm}|m{2cm}}
\centering{Type} & \centering{Level Sets} & \centering{Reeb Graph} & \centering{Type} & \centering{Level Sets} & {\centering Reeb Graph}
\\
\hline
 \centering I & \centering
\begin{tikzpicture}[thick, scale = 1.33]
  \draw (-0.91, 0) -- (0.91, 0);
  \draw [dotted] (0.3, 0) .. controls (0.2, 0.3) and (-0.2, 0.3) .. (-0.3,0);
    \draw [dotted](0.6, 0) .. controls (0.4, 0.6) and (-0.4, 0.6) .. (-0.6,0);
      \draw [dotted] (0.9, 0) .. controls (0.6, 0.9) and (-0.6, 0.9) .. (-0.9,0);
        \fill [opacity = 0.1] (-0.9, 0) -- (0.9, 0)  .. controls (0.6, 0.9) and (-0.6, 0.9) .. (-0.9,0) ;
            \fill (0, 0) circle [radius=1.5pt];
      \end{tikzpicture}
&
\centering
\begin{tikzpicture}[thick, scale = 2]
      \draw [a, dashed](1.5, 0) -- (1.5, 0.5);
       \fill (1.5, 0) circle [radius=1pt];
\end{tikzpicture}
&
\centering
II
&
\centering
\begin{tikzpicture}[thick, scale = 2]
  \draw [] (-0.32, 0) -- (0.32, 0);
  \draw [dotted] (0, 0.4) circle [radius = 0.4];
          \fill (0, 0) circle [radius=1pt];
            \begin{scope}
    \clip (-0.6,0) rectangle (0.6, 1);
     \draw [dotted] (0, 0.4) circle [radius = 0.5];
       \fill [opacity = 0.1] (0, 0.4) circle [radius = 0.5];
              \fill [ fill = white] (0, 0.4) circle [radius = 0.3];
              \draw [dotted]  (0, 0.4) circle [radius = 0.3];;
\end{scope}
\end{tikzpicture}
&
{
{
\parbox{2cm}{
\centering
\begin{tikzpicture}[thick, scale = 2]
     \draw [a, dashed](1.5, 0) -- (1.5, 0.5);
         \draw [a](1.5, 0.5) -- (1.5, 1);
       \fill (1.5, 0.5) circle [radius=1pt];
\end{tikzpicture}
}
}
}
\\
\hline
\centering III & 
\centering
\begin{tikzpicture}[thick, scale = 1.33]
  \draw [] (-0.64, 0) -- (0.64, 0);
                     \fill (0, 0) circle [radius=1.5pt];
            \begin{scope}
    \clip (-1.1,0) rectangle (1.1, 0.8);
      \draw [dotted] (0, 0.8) circle [radius = 0.8];
     \draw [dotted] (0, 0.8) circle [radius = 1];
       \fill [opacity = 0.1] (0, 0.8) circle [radius = 1];
              \fill [fill = white, draw = white] (0, 0.8) circle [radius = 0.6];
                       \draw [dotted] (0, 0.8) circle [radius = 0.6];
\end{scope}
  \draw [] (-1.01, 0.8) -- (-0.6, 0.8);
  \draw [] (1.01, 0.8) -- (0.6, 0.8);   
\end{tikzpicture}
& 
\centering
\begin{tikzpicture}[thick, scale = 2]
              \draw [a, dashed](1.2, 0) -- (1.5, 0.5);
                  \draw [a, dashed](1.8, 0) -- (1.5, 0.5);
         \draw [a, dashed](1.5, 0.5) -- (1.5, 1);
       \fill (1.5, 0.5) circle [radius=1pt];
\end{tikzpicture}
& 
\centering
IV
&
\begin{tikzpicture}[thick, scale = 1.33]
                     \fill (0, 0) circle [radius=1.5pt];
                     \begin{pgfinterruptboundingbox}
                     \draw [dotted] (0,0) .. controls (-1.5,1.5) and (-1.5,-1.5) .. (0,0);
                     \end{pgfinterruptboundingbox}
                        \begin{scope}
                         \clip (-1.3, -1) rectangle (0.65, 1);
                         \begin{pgfinterruptboundingbox}
                                          \draw [dotted] (0,0) .. controls (1.5,1.5) and (1.5,-1.5) .. (0,0);
                                          \end{pgfinterruptboundingbox}
                     \draw [dotted] plot [smooth cycle] coordinates {(0, 0.2) (-0.65, 0.55) (-1.1, 0.4) (-1.25, 0) (-1.1, -0.4) (-0.65, -0.55) (0,-0.2) (0.65, -0.55) (1, -0.5) (1.2,0) (1, 0.5)(0.65, 0.55)};
                       \fill [opacity = 0.1] plot [smooth cycle] coordinates {(0, 0.2) (-0.65, 0.55) (-1.1, 0.4) (-1.25, 0) (-1.1, -0.4) (-0.65, -0.55) (0,-0.2) (0.65, -0.55) (1, -0.5) (1.2,0) (1, 0.5)(0.65, 0.55)};
                         \fill [fill = white] (0.65,0) ellipse (0.35 and 0.3);
                         \draw [dotted] (0.65,0) ellipse (0.35 and 0.3);
                     \end{scope}
                         \fill [fill = white] (-0.65,0) ellipse (0.35 and 0.3);
                         \draw [dotted] (-0.65,0) ellipse (0.35 and 0.3);;
                          \draw [] (0.65, 0.285) -- (0.65, 0.56);
                                   \draw [] (0.65, -0.285) -- (0.65, -0.56);
\end{tikzpicture}
&
\parbox{2cm}{
\centering
\begin{tikzpicture}[thick, scale = 2]
              \draw [a](1.2, 0) -- (1.5, 0.5);
                  \draw [a, dashed](1.8, 0) -- (1.5, 0.5);
         \draw [a, dashed](1.5, 0.5) -- (1.5, 1);
       \fill (1.5, 0.5) circle [radius=1pt];
\end{tikzpicture}
}
\\
\hline
\centering V
&
\centering
\begin{tikzpicture}[thick, scale = 1.33]
                     \fill (0, 0) circle [radius=1.5pt];
                        \begin{scope}                  
                         \clip (-0.65, -1) rectangle (0.65, 1);
                                              \begin{pgfinterruptboundingbox}
                     \draw [dotted] (0,0) .. controls (-1.5,1.5) and (-1.5,-1.5) .. (0,0);
                     \end{pgfinterruptboundingbox}
                         \begin{pgfinterruptboundingbox}
                                          \draw [dotted] (0,0) .. controls (1.5,1.5) and (1.5,-1.5) .. (0,0);
                                          \end{pgfinterruptboundingbox}
                     \draw [dotted] plot [smooth cycle] coordinates {(0, 0.2) (-0.65, 0.55) (-1.1, 0.4) (-1.25, 0) (-1.1, -0.4) (-0.65, -0.55) (0,-0.2) (0.65, -0.55) (1, -0.5) (1.2,0) (1, 0.5)(0.65, 0.55)};
                       \fill [opacity = 0.1] plot [smooth cycle] coordinates {(0, 0.2) (-0.65, 0.55) (-1.1, 0.4) (-1.25, 0) (-1.1, -0.4) (-0.65, -0.55) (0,-0.2) (0.65, -0.55) (1, -0.5) (1.2,0) (1, 0.5)(0.65, 0.55)};
                         \fill [fill = white] (0.65,0) ellipse (0.35 and 0.3);
                         \draw [dotted] (0.65,0) ellipse (0.35 and 0.3);;
                           \fill [fill = white] (-0.65,0) ellipse (0.35 and 0.3);
                            \draw [dotted] (-0.65,0) ellipse (0.35 and 0.3);;
                     \end{scope}                    
           \draw [] (0.65, 0.285) -- (0.65, 0.56);
                                   \draw [] (0.65, -0.285) -- (0.65, -0.56);
                               \draw [] (-0.65, 0.285) -- (-0.65, 0.56);
                                   \draw [] (-0.65, -0.285) -- (-0.65, -0.56);
\end{tikzpicture}
&
\parbox{2cm}{
\centering
\begin{tikzpicture}[thick, scale = 2]
              \draw [a, dashed](1.2, 0) -- (1.5, 0.5);
                  \draw [a, dashed](1.8, 0) -- (1.5, 0.5);
         \draw [a, dashed](1.5, 0.5) -- (1.2, 1);
             \draw [a, dashed](1.5, 0.5) -- (1.8, 1);
       \fill (1.5, 0.5) circle [radius=1pt];
\end{tikzpicture}
}
\end{tabular}
}
\caption{Five types of critical points on surfaces with boundary and type of vertices of the corresponding Reeb
 graphs. Solid lines correspond to pieces of the boundary, while dotted lines are connected components of level sets of the function.}\label{types}
\end{figure}

 Namely, as depicted on Figure \ref{types} by employing this correspondence of solid and dashed edges of a Reeb graph  to circular levels and segments respectively,  one can have 
\begin{longenum}
\item a min/max on the boundary, corresponding to a 1-valent vertex with a dashed line in the Reeb graph, 
 
\item  a min/max on the boundary, corresponding to a 2-valent vertex with one solid 
 and one dashed line in the Reeb graph, 
 
\item a min/max on the boundary, corresponding to a $Y$-type 3-valent vertex with three dashed lines 
 in the Reeb graph, 
 
\item a saddle point on $M$, corresponding to a $Y$-type 3-valent vertex with two dashed lines 
 and a solid line coming together  in the Reeb graph, 
 
\item a saddle point on $M$, see Figure \ref{types}, corresponding a 4-valent vertex of $X$-type between 
 dashed edges in $\Gamma_F$.
\end{longenum}

%
%
%
%
%

\begin{definition}\label{RGDef2}
A \textit{Reeb graph} $(\Gamma,  f)$ is an oriented connected graph  $\Gamma$ with dashed and solid edges and
a continuous function 
$f \colon \Gamma \to \R$  which satisfy the  properties following from the above description of vertices.
\end{definition}

As before, a simple Morse function 
$F \colon M \to \R$ on an orientable connected surface $M$ with boundary can be associated with 
a Reeb graph $\Gamma_F$  in the sense of Definition \ref{RGDef}. 

\begin{example}\label{ex:two_surf}
It turns out one cannot reconstruct topology of the surface with boundary from its Reeb graph alone.
For example, consider a dashed graph $\Gamma$ with $\dim \Hom_1(\Gamma) = 2$, see Figure~\ref{fig:dg}. 
This graph corresponds to a disk with two holes, with the corresponding function given by the vertical 
coordinate $y$. Cutting the disk along the dashed level sets of $y$ and then restoring the three gluings with 
opposite orientations, one obtains a torus with one hole. (Indeed, after the new gluings one obtains 
an oriented surface with the same Euler characteristic $-1$,  but  with only one boundary component, hence 
a torus with a hole.)
Since we cut the surface along level sets, the torus is naturally equipped with a simple Morse function whose graph coincides with the initial one.

\end{example}


\begin{figure}
\centering
\begin{tikzpicture}[thick, scale = 1]
    \node [vertex] at (5.5,0.3) (nodeC) {};
    \node [vertex]  at (5.5,1) (nodeD) {};
    \node [vertex] at (5.5,3) (nodeE) {};
    \node [vertex]  at (5.5,3.7) (nodeF) {};
    \node [vertex]  at (5.1,1.7) (nodeG) {};    
                \node [vertex]  at (5.9,2.3) (nodeI) {};    
    \draw  [a, dashed] (nodeC) -- (nodeD);
    \fill (nodeC) circle [radius=2pt];
    \fill (nodeD) circle [radius=2pt];
    \fill (nodeE) circle [radius=2pt];
    \fill (nodeF) circle [radius=2pt];
        \fill (nodeG) circle [radius=2pt];
                \fill (nodeI) circle [radius=2pt];
    \draw  [a, dashed] (nodeE) -- (nodeF);
    \draw[a, dashed] (nodeG)  --  (nodeI);
     \draw[a, dashed] (nodeD)  .. controls +(0.6,+1) ..  (nodeE);     
          \draw[a, dashed] (nodeD)  .. controls +(-0.6,+1) ..  (nodeE);     
 \fill [opacity = 0.1] (8.5+1.5, 2) ellipse (1.7 and 1.7);
  \draw (8.5+1.5, 2) ellipse (1.7 and 1.7);
      \draw [densely dashed] (6.8+1.5, 2) -- (9.2+1, 2);
       \draw [densely dashed] (6.9+1.5, 1.5) -- (9.2+1, 1.5);  
    \fill [fill = white, draw = black] (9.2+1.5, 1.65) ellipse (0.65 and 0.65);
      \fill [fill = white, draw = black] (7.8+1.5, 2.35) ellipse (0.65 and 0.65);
\end{tikzpicture}
\caption{Dashed Reeb graph with $\dim \Hom_1(\Gamma) = 2$ corresponding to both a disc with two holes and torus with one hole. Cutting the disk drawn here along the three dashed levels and then restoring the gluings with 
opposite orientations, one obtains a torus with one hole.}\label{fig:dg}
\end{figure}
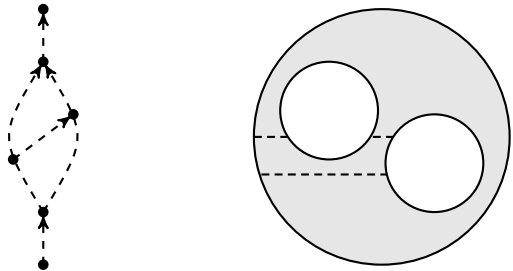

\begin{problem}
Describe the full information required from an abstract Reeb graph to reconstruct the corresponding 
surface with boundary.
\end{problem}

Further, we are interested in functions on surfaces equipped with an area form. 
As before,  the Reeb graph $\Gamma_F$ of any simple Morse function $F \colon M \to \R$ on a surface with 
 boundary has a natural structure of a measured Reeb graph, where the measure $\mu$ on $\Gamma_F$ is 
 the pushforward of the area form on $M$. In the presence of an area form  $\omega$ on $M$ 
 the Reeb graph gets endowed with an appropriately defined  \textit{log-smooth measure}.

\begin{problem}\label{measureproperty2} Describe the properties of the log-smooth
measure $\mu$ on a Reeb graph $(\Gamma,f)$ with solid and dashed edges. Namely,
describe the asymptotics similar to Definition \ref{measureproperty} at vertices of types ${\rm (i)}$-${\rm (v)}$.
 \end{problem}

Solution of this problem will allow one to define an abstract measured Reeb graph as a Reeb graph $(\Gamma,  f)$ 
with solid and dashed edges and endowed with a log-smooth measure $\mu$. The correspondence 
between abstract measured Reeb graphs and those corresponding to functions on surfaces with boundary 
now should include compatibility conditions for the surface and the Reeb graph beyond equality of total volumes 
 $\int_\Gamma \diff \mu=\int_M\omega$ and 
of the corresponding homology, as it should account for certain discrete information discussed above.

\begin{problem}
Describe the compatibility conditions for surfaces with boundary and  area form and augmented
measured Reeb graphs with solid and dashed edges.
 \end{problem}

Note that the would-be compatibility condition should be consistent with that for the case of Morse functions 
constant on boundary components and considered in \cite{IK} as a limiting case. (Formally speaking,
for simple Morse functions constants on the boundary have to be ruled out and can be considered only in the limit, since the restriction of functions to the boundary is to be Morse.)

The goal here is to establish a classification theorem similar to Theorem \ref{thm:sdiff-functions}:
For a surface $M$, possibly with boundary,
the mapping  assigning the augmented measured Reeb graph $\Gamma_F$ to a simple Morse function $F$ 
should provide a one-to-one correspondence between simple Morse functions on $M$ up to a symplectomorphism 
and augmented measured Reeb graphs compatible with $M$.

\medskip


\subsection{The boundary case:  coadjoint orbits}\label{sect:coadj-sdiff2}

To classify coadjoint orbits for the symplectomorphism group of 
a compact connected surface $M$  with boundary one can employ a classification of Morse functions on such surfaces. Recall that regardless of the boundary, the regular dual $\SVect^*(M)$ 
of the Lie algebra $\SVect(M)$ of divergence-free
vector fields on a surface $M$  is identified with the space $\Omega^1(M) / \diff \Omega^0(M)$. 
Similarly to the no-boundary case, for a  coset of 1-forms $\alpha\in \Omega^1(M) / \diff \Omega^0(M)$
consider the vorticity function $\mathfrak{curl}[\oneform] := {\diff \oneform}/{\omega}.$
Again confine ourselves to cosets of 1-forms $[\oneform] \in \SVect^*(M)$ of Morse type, i.e. to 
cosets with a simple Morse function  $\mathfrak{curl}[\oneform]$ on the surface with boundary.

\medskip

Let $[\oneform] \in \SVect^*(M)$ be Morse-type, and let $F = \mathfrak{curl}[\oneform]$. 
Now one can define a {\it circulation function} only on solid edges of the measured Reeb graph $\Gamma_F$. 
Indeed, let $\pi \colon M \to \Gamma_F$ 
be the natural projection. Take any point $x$ lying in the interior of a solid edge 
$e \in \Gamma_F$. Then $\pi^{-1}(x)$ is an oriented circle $\ell_x$.  The integral of $\oneform$ 
over $\ell_x$ does not depend on the choice of a representative $\oneform \in [\oneform]$.
(Note that preimages of points $x$ in dashed edges $e$ are segments, and integrals 
of $\oneform$  over them do depend on a representative $\oneform\in [\oneform]$, i.e. they are not well-defined objects on $\SVect^*(M)$.) 
Thus, we obtain a real-valued function $\circulation$ defined on solid edges, which is an antiderivative 
of the density $f \diff \mu$ wherever it is defined and which satisfies the properties of Definition \ref{circProperties} 
at the vertices involving only solid edges. 
It is suggestive to define a circulation graph as an augmented measured Reeb 
graph equipped with a circulation function. It turns out however that the circulation function  has to be 
further supplemented by additional information.

\begin{example}
Circulations, being integrals of 1-forms $\alpha$, over boundary components are invariants of coadjoint action, i.e.
Casimirs. (Since diffeomorphisms may interchange boundary components, the corresponding circulations 
over boundary components are to be considered up to permutation.) However, they do not enter the definition 
of circulation function on solid edges defined above.
 In particular, different surfaces corresponding to the same Reeb graph in Example \ref{ex:two_surf},
 see Figure \ref{fig:dg}, have different number of boundary components, and hence different number of 
 Casimirs from boundary conditions. On the other hand, since the Reeb graph consists 
 of only dashed edges, one does not relate a circulation function to it, and hence one needs new options 
 for ``storing" the information about the coadjoint orbits.
 \end{example}
 
 \begin{problem}
 How to supplement the circulation function on solid edges by extending it to 
 dashed ones to fully describe coadjoint orbits by means of ``circulation augmented measured Reeb 
graphs"?
\end{problem}

 One possibility would be to define circulation functions as  integrals of cosets over level segments (rather than 
 circle levels) by closing up the segments using boundary arcs.  
 We expect that the latter approach works at least in the case of one boundary component.

\begin{remark}
Note that a partial list of Casimirs in the 2D case with boundary can be described exactly 
in the same way as in the no-boundary case:
one has to consider all moments for the measure on each edge, either solid or dashed,  of the 
measured Reeb graph for the vorticity function. The discrete information on the Reeb graph does 
not affect the definition of families of continuous Casimirs. 
However, the list of additional circulations yet needs to be detailed.
\end{remark}

\bibliographystyle{plain}
\bibliography{sympl_orbits}

\begin{thebibliography}{10}

\bibitem{Akhiezer}
N.I. Akhiezer.
\newblock {\em The classical moment problem and some related questions in
  analysis}, volume~5.
\newblock Oliver \& Boyd, Edinburgh, 1965.

\bibitem{Arn66}
V.I. Arnold.
\newblock Sur la g{\'e}om{\'e}trie diff{\'e}rentielle des groupes de {L}ie de
  dimension infinie et ses applications {\`a} l'hydrodynamique des fluides
  parfaits.
\newblock {\em Annales de l'institut Fourier}, 16(1):319--361, 1966.

\bibitem{AK}
V.I. Arnold and B.A. Khesin.
\newblock {\em Topological methods in hydrodynamics}.
\newblock Springer, New York, 1998.

\bibitem{ChSv}
A.~Choffrut and V.~{\v{S}}ver{\'a}k.
\newblock Local structure of the set of steady-state solutions to the 2{D}
  incompressible {E}uler equations.
\newblock {\em Geometric and Functional Analysis}, 22(1):136--201, 2012.

\bibitem{IK}
A.~Izosimov and B.~Khesin.
\newblock Characterization of steady solutions to the 2{D} {E}uler equation.
\newblock {\em International Mathematics Research Notices},
  doi:10.1093/imrn/rnw230, 2016.

\bibitem{IKM}
A.~Izosimov, B.~Khesin, and M.~Mousavi.
\newblock Coadjoint orbits of symplectic diffeomorphisms of surfaces and ideal
  hydrodynamics.
\newblock {\em Annales de l'Institut Fourier}, 66(6):2385--2433, 2016.

\bibitem{Kir1}
A.A. Kirillov.
\newblock Orbits of the group of diffeomorphisms of a circle and local {L}ie
  superalgebras.
\newblock {\em Functional Analysis and Its Applications}, 15(2):135--137, 1981.

\bibitem{Segal}
G.~Segal.
\newblock Integrable systems and inverse scattering.
\newblock In {\em Integrable systems: twistors, loop groups and Riemann
  surfaces}. Clarendon Press, Oxford, 1999.

\bibitem{Serre}
D.~Serre.
\newblock Invariants et d{\'e}g{\'e}n{\'e}rescenc symplectique de
  l'{\'e}quation d'{E}uler des fluids parfaits incompressibles.
\newblock {\em CR Acad. Sci. Paris, S{\'e}r. A}, 298(14):349--352, 1984.

\bibitem{Shn}
A.I. Shnirelman.
\newblock Lattice theory and flows of ideal incompressible fluid.
\newblock {\em Russ. J. Math. Phys}, 1(1):105--113, 1993.

\bibitem{Yoshida}
Z.~Yoshida, P.J. Morrison, and F.~Dobarro.
\newblock Singular {C}asimir elements of the {E}uler equation and equilibrium
  points.
\newblock {\em Journal of Mathematical Fluid Mechanics}, 16(1):41--57, 2014.

\end{thebibliography}
\addcontentsline{toc}{section}{References}
\end{document}